\documentclass[onefignum,onetabnum]{siamart251104}

\usepackage{microtype}
\usepackage{mathtools}
\usepackage{amsmath,amssymb}
\usepackage{booktabs}
\usepackage{subcaption}



\usepackage{lipsum}
\usepackage{amsfonts}
\usepackage{graphicx}
\usepackage{epstopdf}
\ifpdf
  \DeclareGraphicsExtensions{.eps,.pdf,.png,.jpg}
\else
  \DeclareGraphicsExtensions{.eps}
\fi


\newsiamremark{remark}{Remark}

\headers{A SGIP Method for Reaction-Diffusion-Advection Equations}{Boyi Hu, Zhongjian Wang, Jack Xin, Zhiwen Zhang}

\title{A Stochastic Genetic Interacting Particle Method for Reaction-Diffusion-Advection Equations\thanks{Submitted to the editors DATE.}}

\author{Boyi Hu\thanks{Department of Mathematics, The University of Hong Kong, Pokfulam Road, Hong Kong SAR, P.R.China (\email{huby22@connect.hku.hk}).}
\and 
Zhongjian Wang\thanks{Division of Mathematical Sciences, School of Physical and Mathematical Sciences, Nanyang Technological University, 21 Nanyang Link, Singapore 637371 (\email{zhongjian.wang@ntu.edu.sg}).}
\and
Jack Xin\thanks{Department of Mathematics, University of California at Irvine, Irvine, CA 92697, USA (\email{jack.xin@uci.edu}).}
\and
Zhiwen Zhang\thanks{Corresponding author. Department of Mathematics, The University of Hong Kong, Pokfulam Road, Hong Kong SAR, P.R.China. Materials Innovation Institute for Life Sciences and Energy (MILES), HKU-SIRI, Shenzhen, P.R. China (\email{zhangzw@hku.hk}).}
}

\usepackage{amsopn}


\usepackage{algorithm}
\usepackage{algorithmicx}
\usepackage{algpseudocode}

\newcommand{\wt}[1]{\widetilde{#1}}

\newtheorem{assumption}{Assumption}

\begin{document}

\maketitle

\begin{abstract}
We develop and analyze a stochastic genetic interacting particle method (SGIP) for reaction-diffusion-advection (RDA) equations. The SGIP method employs operator splitting to approximate the advection-diffusion and reaction processes, treating the former using particle drift-diffusion and the latter via exact or implicit integration of reaction dynamics over bins, where particle density is estimated using a histogram. A key innovation is the incorporation of adaptive resampling to close the loop of particle and density field description of solutions, mimicking the selection mechanism in genetics. Resampling is also crucial for maintaining long-term stability by redistributing particles in accordance with the evolving density field. We provide a comprehensive error analysis and establish convergence bounds under appropriate regularity assumptions. Numerical experiments in one to three space dimensions demonstrate the method's effectiveness across various reaction types (Fisher-Kolmogorov-Petrovsky-Piskunov (FKPP), cubic, Arrhenius) and flow configurations (shear, cellular, cat's eye, Arnold-Beltrami-Childress (ABC) flows), showing excellent agreement with the finite difference method (FDM) while offering computational advantages for complex flow geometries and higher-dimensional problems.
\end{abstract}

\begin{keywords} 
Stochastic particle method, resampling methods, reaction-diffusion-advection (RDA) equations, operator splitting, convergence analysis, 3D simulations.
\end{keywords}

\begin{MSCcodes}
35K57, 65C35, 65M12, 65M75, 65J08.
\end{MSCcodes}

\section{Introduction}\label{section_intro}
Reaction-Diffusion-Advection (RDA) equations play a fundamental role in modeling numerous physical, biological, and chemical processes. The general form of RDA equations is given by
\begin{equation}
\frac{\partial u}{\partial t} + \nabla \cdot (\mathbf{v} u) = D\Delta u + r(u)
\end{equation}
where $u(\mathbf{x}, t)$ represents the concentration or density field, $\mathbf{v}(\mathbf{x}, t)$ is the ambient fluid flow field, $D$ is the molecular diffusion coefficient, and $r(u)$ is the reaction nonlinearity. The versatility of RDA equations is reflected in their wide-ranging applications \cite{chorin1973numerical, peskin1977numerical,williams2018combustion}. In ecology, the Fisher-Kolmogorov-Petrovsky-Piskunov (FKPP) equation models the propagation of population and gene waves \cite{fisher1937wave}, whereas developmental biology employs Turing's reaction-diffusion model to explain morphological patterning \cite{turing1952chemical}. The framework also extends to materials science for simulating phase separation via the Cahn-Hilliard equation \cite{cahn1958free}, and to finance, where it underpins the Black-Scholes model for option pricing \cite{black1973pricing}, collectively underscoring its broad utility \cite{hundsdorfer2013numerical}.

The development of efficient numerical methods for these equations remains an active research area, with challenges from nonlinearities, multiscale phenomena, and high-dimensionality. The finite difference method (FDM) is well-suited for simple geometries, with established stability and convergence properties \cite{leveque2007finite, hoff1978stability}. For complex domains, the Finite Element Method (FEM) offers flexibility, grounded in variational principles \cite{komala2018finite, hughes2012finite, shen2013finite}. Operator splitting techniques (e.g., Lie-Trotter and Strang splitting) are often used to mitigate stiffness from diffusion or reaction terms \cite{trotter1959product,strang1968construction}. Despite their successes, the mesh-based methods struggle with the curse of dimensionality and problems involving large deformations or moving boundaries, forcing the computationally expensive and error-prone process of repeated mesh regeneration to avoid severe element distortion.

Particle (Lagrangian) techniques present a mesh-free alternative to handle large deformation (e.g., sharp internal layers) and high-dimensional problems. In the 1980s, 
Sherman and Peskin \cite{sherman1986monte,sherman1988solving} introduced a random walk method on the spatial derivative of solutions. The reaction is treated through a reproduction and destruction process to evolve steep wavefronts, with applications to Hodgkin-Huxley equations. Puckett \cite{puckett1989convergence} proved the convergence of a random gradient method for the FKPP equation, where the solution gradient is represented by an empirical measure of particles with weights corresponding to jumps in a step-function approximation. These early works are limited to the one-dimensional (1D) case and to monotone initial data, however. For recent efforts on particle methods  
of RDA-type equations, we refer to   \cite{lyu2022convergent,ParWasser_2025,
zhang2025convergent,lyu2025stochastic} 
and references therein. In the absence of low-order reactions, the current authors \cite{wang2025novel, hu2024stochastic, hu2025convergence} recently introduced efficient stochastic particle-field methods to compute parabolic-parabolic chemotaxis and haptotaxis advection-diffusion systems up to 3D, instead of a history-dependent pure particle description involving heat kernels. 

In the spirit of particle-field representation, the stochastic genetic interacting particle (SGIP) method developed in this work provides an efficient and robust framework for RDA equations in several space dimensions. The method naturally extends the SGIP framework \cite{lyu2022convergent, zhang2025convergent}, originally constructed from the Feynman-Kac semigroup for linear RDA equations, to handle the nonlinear and multi-dimensional settings. The algorithm consists of a 3-step operator-splitting to decompose the RDA process: drift-diffusion, density update, and particle resampling. 

The drift-diffusion process of the particles handles the advection-diffusion part of the PDE system through its probabilistic representation (Algorithm \ref{alg:advection_diffusion}). 
The reaction process is approximated by going between particles and density: we reconstruct the density field by the particle locations, and evolve in time via reaction ODEs by either an exact (e.g., for FKPP) or a stable implicit integration (Algorithm \ref{alg:reaction}). 
 Then, particles are re-drawn from the updated density using an adaptive resampling step (Algorithm \ref{alg:resampling}) so that they are ready for advection-diffusion again, closing the loop. The joint particle-density evolution is a way to overcome the difficulty of seeking a self-contained particle description. As a comparison, SGIP for linear RDA \cite{lyu2022convergent, zhang2025convergent}, the particle dynamics are in a closed loop via drift-diffusion and resampling steps without relying on the density field reconstruction. 

 The SGIP method provides a simple-to-program, scalable, low-cost, and stable framework for high-dimensional RDA problems. Its mesh-free formulation bypasses the geometric constraints of grid-based methods, allowing it to handle complex flows and large gradients of solutions with ease. This capability is augmented by favorable computational scaling in high dimensions and long-term stability, thanks to adaptive resampling.
 Furthermore, the SGIP framework here is supported by a complete convergence analysis under appropriate regularity assumptions, which establishes consistent accuracy controlled by time step, spatial resolution, and the number of particles.
 
Our numerical experiments comprehensively demonstrate SGIP's capabilities across various dimensions and complexities, revealing its superior computational efficiency—particularly in demanding 3D settings where traditional mesh-based methods become prohibitively expensive. In 1D scenarios, SGIP shows excellent agreement with FDM for different reaction types (FKPP, cubic, and Arrhenius) \cite{fife2013mathematical, arrhenius1889reaktionsgeschwindigkeit}, accurately capturing propagation speeds and front shapes. The method effectively resolves a spectrum of complex flow configurations, from fundamental 2D shear, cellular, and cat's eye flows to challenging 3D Arnold-Beltrami-Childress (ABC) flows \cite{ranz1979applications, constantin2000bulk, pierrehumbert1991large, dombre1986chaotic, xin2000front}, even when coupled with Lipschitz-continuous reaction terms.  In these scenarios, particularly in 3D, SGIP maintains high accuracy while offering significant computational advantages over traditional mesh-based methods, such as the FDM. As the diffusion coefficient decreases and advection dominates, the SGIP framework remains stable and efficient, in stark contrast to FDM, which suffers from prohibitive computational costs and severe numerical instability under such conditions.


The rest of the paper is organized as follows. In Section \ref{section_alg}, we present the detailed formulation of the SGIP algorithm. This section is divided into three subsections dedicated to the stochastic advection-diffusion step, the reaction step, and the adaptive resampling strategy, respectively. Section \ref{section_analysis} is devoted to the theoretical analysis, where a comprehensive error decomposition is provided, and convergence bounds are established. In Section \ref{section_num}, extensive numerical experiments are conducted to validate the method's performance, ranging from 1D benchmark tests to 2D and 3D simulations involving complex flows. Finally, we conclude with a summary of our findings and a discussion of future research directions in Section \ref{section_con}.

\section{SGIP Algorithm}\label{section_alg}

In this section, we will present the SGIP algorithm for solving the reaction-diffusion-advection (RDA) equations. 
The mathematical foundation of our method is the operator-splitting technique. In particular, we consider the RDA equation as an abstract evolution equation:
\begin{equation}\label{split}
\frac{\partial u}{\partial t} = (\mathcal{A} + \mathcal{R})u, \quad u(0) = u_0,
\end{equation}
where $\mathcal{A}u = \nabla \cdot (\mathbf{v} u) +D\Delta u$ is the advection-diffusion operator and $\mathcal{R}u = r(u)$ is the reaction operator. Following the Lie-Trotter splitting scheme, we approximate the solution evolution by decoupling the operators:
\begin{equation}\label{lie_split}
u_{n+1} \approx \mathcal{S}_{\mathcal{R}}(\Delta t) \circ \mathcal{S}_{\mathcal{A}}(\Delta t)(u_n).  
\end{equation}
Here, $u_n$ denotes the numerical approximation of $u(\mathbf{x}, t_n)$ at time $t_n$, and $u_{n+1}$ denotes the approximation at the next time level $t_{n+1} = t_n + \Delta t$. The operators \(\mathcal{S}_{\mathcal{A}}(\Delta t)\) and \(\mathcal{S}_{\mathcal{R}}(\Delta t)\) are numerical approximations over a single time step \(\Delta t\) to the exact solutions of the subproblems $\partial_t u = \mathcal{A}u$ and $\partial_t u = \mathcal{R}u$,
respectively. This splitting enables us to employ specialized numerical techniques for each physical process, ensuring its long-term stability and efficiency. 

For the advection-diffusion part $\mathcal{S}_{\mathcal{A}}(\Delta t)$, we employ the stochastic particle method, which turns out to be stable in various high-dimensional advection-dominant problems. For the reaction counterpart $\mathcal{S}_{\mathcal{R}}({\Delta t})$, we apply a density-based approach, which recovers the density $u$ from the particle formulation and approximates the reaction term by resampling. Details of each part of the algorithm are listed in the following subsections.

\subsection{Initialization}
Let $\Omega = [-L, L]^d$ be the computational domain. We approximate the density field $u(\mathbf{x},t)$ using an empirical measure of $N$ particles:
\begin{equation}
u(\mathbf{x},t) \approx \frac{M_0}{N} \sum_{i=1}^N \delta(\mathbf{x} - \mathbf{X}^i(t)),
\end{equation}
where $M_0 = \int_\Omega u_0(\mathbf{x}) d\mathbf{x}$ is the initial total mass and $\{\mathbf{X}^i(t)\}_{i=1}^N$ are particle positions. 
The initial particle positions at $t_0 = 0$ are sampled from the normalized initial distribution:
\begin{equation}
\mathbf{X}_0^i \sim \rho_0(\mathbf{x}) = \frac{u_0(\mathbf{x})}{M_0}, \quad i = 1,\dots,N.
\end{equation}
Particle masses are initialized uniformly according to
\begin{equation}
m_0^i = \frac{M_0}{N}, \quad i = 1,\dots,N.
\end{equation}

In the temporal direction, we assume that the computational time range $[0, T]$ is partitioned into time steps defined by $\{t_n\}_{n=0}^{n_T}$, where $t_0 = 0$ and $t_{n_T} = T$. We consider the uniform time step, i.e., there exists $\Delta t = t_{n+1} - t_n$ for $n = 0, \dots, n_T-1$. For ease of presenting our algorithm, we will use a slight abuse of notation. We will represent the density $u$ at time $t_n$ as $u_n$, and the particle positions at time $t_n$ as $\{\mathbf{X}_n^i\}_{i=1}^N$. Similarly, particle masses at time $t_n$ are denoted as $m_n^i$.

\subsection{Advection-Diffusion Step}

The advection-diffusion sub-step $\mathcal{S}_{\mathcal{A}}({\Delta t})$ advances the solution from $t_n$ to an intermediate time. Each particle follows the stochastic differential equation:
\begin{equation}
d\mathbf{X}^i = \mathbf{v}(\mathbf{X}^i, t)dt + \sqrt{2D}d\mathbf{W}^i(t),
\end{equation}
where $\mathbf{v}(\mathbf{x},t)$ is the velocity field and $\mathbf{W}^i(t)$ are independent Brownian motions.

Discretized using the Euler-Maruyama scheme over the time interval $[t_n, t_{n+1}]$:
\begin{equation}\label{advection_diffusion_particle}
\mathbf{X}_*^i = \mathbf{X}_n^i + \mathbf{v}(\mathbf{X}_n^i, t_n)\Delta t + \sqrt{2D\Delta t}\boldsymbol{\xi}_n^i,
\end{equation}
where $\boldsymbol{\xi}_n^i \sim \mathcal{N}(0,\mathbf{I}_d)$ are independent standard normal random variables, and $\mathbf{X}_*^i$ denotes the intermediate particle positions. The complete procedure for this step is summarized in Algorithm \ref{alg:advection_diffusion}.

\begin{algorithm}[H]
\caption{Advection-Diffusion Update}
\label{alg:advection_diffusion}
\begin{algorithmic}[1]
\Require Particle positions $\{\mathbf{X}_n^i\}_{i=1}^N$, velocity field $\mathbf{v}(\mathbf{x},t)$, time step $\Delta t$
\Ensure Intermediate particle positions $\{\mathbf{X}_*^i\}_{i=1}^N$
\For{$i=1$ \textbf{to} $N$}
    \State Sample $\boldsymbol{\xi}_n^i \sim \mathcal{N}(0,\mathbf{I}_d)$
    \State $\mathbf{X}_*^i \gets \mathbf{X}_n^i + \mathbf{v}(\mathbf{X}_n^i, t_n)\Delta t + \sqrt{2D\Delta t}\boldsymbol{\xi}_n^i$
\EndFor
\end{algorithmic}
\end{algorithm}

\subsection{Reaction Step}

The reaction sub-step $\mathcal{S}_{\mathcal{R}}(\Delta t)$ handles the nonlinear reaction term from the intermediate state to time $t_{n+1}$. This step consists of two main components: density estimation on an Eulerian grid, followed by reaction integration.

\subsubsection{Density Estimation}

To evaluate the reaction terms, we first reconstruct the density field from the particle distribution onto a uniform Cartesian grid covering the domain $\Omega = [-L, L]^d$. Let $K$ be the number of grid intervals in each spatial direction, giving a cell size of $\Delta x = 2L/K$. We define the index set of all grid cells as
\begin{equation}
\mathcal{G} := \{1, \dots, K\}^d .
\end{equation}
The cell centers, denoted by $\{\mathbf{x}_j\}_{j \in \mathcal{G}}$, serve as the reconstruction points. The density $\{\hat{u}_*^j\}_{j \in \mathcal{G}}$ in each bin is estimated by counting particles and summing their masses:

\begin{equation}
\hat{u}_*^j=\frac{1}{(\Delta x)^d} \sum_{i:\mathbf{X}_*^i \in \text{bin}_j} m_n^i.
\end{equation}
This provides a piecewise constant approximation of the density field at the intermediate state, which serves as the initial condition for the reaction integration.

\subsubsection{Reaction Integration}

Given the density estimates $\{\hat{u}_*^j\}$, we solve the reaction equation $\partial_t u = r(u)$ over the time interval $\Delta t$ independently in each bin. We consider two cases based on the form of the reaction term:

\paragraph{Closed-form Solutions}
For certain reaction terms, exact integration is possible. A prominent example is the FKPP reaction:
$r(u) = u(1 - u)$,
which admits the exact solution:
$\hat{u}_{n+1}^j = \frac{\hat{u}_*^j e^{\Delta t}}{1 + \hat{u}_*^j(e^{\Delta t} - 1)}$.
Other examples include linear reactions $r(u) = \lambda u$, which yield exponential growth or decay $\hat{u}_{n+1}^j = \hat{u}_*^j e^{\lambda\Delta t}$.

\paragraph{General Reaction Terms}
For reaction terms without closed-form solutions:
\begin{itemize}
    \item Cubic reaction: $r(u) = u^2(1 - u)$,
    \item Arrhenius-type reaction: $r(u) = e^{-E/u}(1 - u)$ with activation energy $E > 0$,
\end{itemize}
an implicit time discretization is adopted to maintain numerical stability for stiff reaction dynamics. The backward Euler scheme reads:
\begin{equation}
\frac{\hat{u}_{n+1}^j - \hat{u}_*^j}{\Delta t} = r(\hat{u}_{n+1}^j),
\end{equation}
which is solved numerically using Newton's method. For stiff problems, higher-order implicit methods such as Crank-Nicolson \cite{crank1947practical} or BDF2 \cite{gear1971numerical} can be employed.

The reaction integration is performed independently in each spatial bin, making this step highly parallelizable. After reaction integration, we obtain the updated density field $\{\hat{u}_{n+1}^j\}_{j \in \mathcal{G}}$ at time $t_{n+1}$. The full workflow for the reaction step is detailed in Algorithm \ref{alg:reaction}.

\subsection{Resampling Step}

Following the reaction integration that produces the updated density field $\{\hat{u}_{n+1}^j\}_{j \in \mathcal{G}}$, we employ a resampling strategy to redistribute particles according to this new density distribution. This step ensures particle efficiency and adaptivity while maintaining mass conservation.

\begin{algorithm}[H]
\caption{Density Estimation and Reaction Integration}
\label{alg:reaction}
\begin{algorithmic}[1]
\Require Intermediate particles $\{\mathbf{X}_*^i\}$, masses $\{m_n^i\}$, reaction function $r(u)$, $\Delta t$
\Ensure Updated density field $\{\hat{u}_{n+1}^j\}_{j \in \mathcal{G}}$, masses $\{m_{n+1}^i\}$
\State \textbf{Density Estimation:}
\For{each bin $j$}
    \State $\hat{u}_*^j \gets \frac{1}{(\Delta x)^d} \sum_{i:\mathbf{X}_*^i \in \text{bin}_j} m_n^i$
\EndFor
\State \textbf{Reaction Integration:}
\For{each bin $j$}
    \If{$r(u)$ has closed-form solution}
        \State $\hat{u}_{n+1}^j \gets \text{ExactIntegration}(\hat{u}_*^j, r, \Delta t)$
    \Else
        \State $\hat{u}_{n+1}^j \gets \text{ImplicitIntegration}(\hat{u}_*^j, r, \Delta t)$
    \EndIf
\EndFor
\State \textbf{Mass Update:}
\State $M_{n+1} \gets \sum_{j \in \mathcal{G}} \hat{u}_{n+1}^j (\Delta x)^d$ \Comment{Compute total mass after reaction}
\State $m_{n+1}^i = M_{n+1}/N$ for all $i=1,\dots,N$
\end{algorithmic}
\end{algorithm}

The probability for a particle to be resampled in bin $j$ is proportional to the mass contained in that bin:

\begin{equation}
p_j = \frac{\hat{u}_{n+1}^j \cdot (\Delta x)^d}{M_{n+1}}, \quad j \in \mathcal{G},
\end{equation}
where $M_{n+1} = \sum_{j \in \mathcal{G}} \hat{u}_{n+1}^j \cdot (\Delta x)^d$ is the total mass after reaction computed in Alg.\ref{alg:reaction}. This probabilistic framework introduces a selection pressure analogous to natural selection in genetics, in that particles are more likely to survive in a bin—and thus be propagated to the next generation—if that bin contains more mass (is ``healthier'') than its neighbors. Beyond this biological analogy, resampling is critical for long-term numerical stability and computational efficiency, as it continuously rebalances the particle ensemble to control statistical variance.

To implement this redistribution, the target particle counts per bin are drawn from a multinomial distribution parameterized by these probabilities: 
\begin{equation}
(n_j)_{j \in \mathcal{G}} \sim \text{Multinomial}(N, (p_j)_{j \in \mathcal{G}}).
\end{equation}
The resampling strategy employs different sampling methods based on the relationship between target and current particle counts:
\begin{itemize}
    \item When $n_j \leq c_j$: Sample without replacement to preserve diversity
    \item When $n_j > c_j$: Sample with replacement to meet the target count
    \item When $c_j = 0$: Generate new particles uniformly within the bin
\end{itemize}

Algorithm \ref{alg:resampling} implements the resampling strategy described above, adaptively redistributing particles according to the updated density field while preserving statistical accuracy. This approach minimizes statistical errors and maintains a good representation of the density field while adapting to the evolving solution structure.

Combining all components developed in the previous subsections, we present the complete SGIP algorithm for RDA equations in Algorithm~\ref{alg:main}.

\begin{algorithm}[H]
\caption{Resampling Procedure}
\label{alg:resampling}
\begin{algorithmic}[1]
\Require Intermediate particle positions $\{\mathbf{X}_*^i\}_{i=1}^{N}$, updated density field $\{\hat{u}_{n+1}^j\}_{j \in \mathcal{G}}$
\Ensure Resampled particles $\{\mathbf{X}_{n+1}^i\}_{i=1}^N$ with updated masses $\{m_{n+1}^i\}_{i=1}^N$
\State $M_{n+1} \gets \sum_{j \in \mathcal{G}} \hat{u}_{n+1}^j \cdot (\Delta x)^d$
\State Compute $p_j = \hat{u}_{n+1}^j \cdot (\Delta x)^d / M_{n+1}$ for $j \in \mathcal{G}$
\State Sample $(n_j)_{j \in \mathcal{G}} \sim \text{Multinomial}(N, (p_j)_{j \in \mathcal{G}})$
\State Initialize empty list $\mathbf{X}_{\text{new}}$
\For{$j \in \mathcal{G}$}
    \State $c_j \gets \left|\{ \mathbf{X}_*^i \in \text{bin}_j \}\right|$ \Comment{Current particle count in bin $j$}
    \If{$c_j > 0$}
        \State $\mathcal{X}_j \gets \{ \mathbf{X}_*^i \mid \mathbf{X}_*^i \in \text{bin}_j \}$ \Comment{Get particles in bin $j$}
        \If{$n_j \leq c_j$}
            \State $\mathbf{X}_{\text{new},j} \gets \text{RandomChoice}(\mathcal{X}_j, n_j, \text{without replacement})$
        \Else
            \State $\mathbf{X}_{\text{new},j} \gets \text{RandomChoice}(\mathcal{X}_j, n_j, \text{with replacement})$
        \EndIf
    \Else
        \State $\mathbf{X}_{\text{new},j} \gets \text{UniformSample}(\text{bin}_j, n_j)$ \Comment{Sample uniformly from empty bin}
    \EndIf
    \State Append $\mathbf{X}_{\text{new},j}$ to $\mathbf{X}_{\text{new}}$
\EndFor
\State $\{\mathbf{X}_{n+1}^i\}_{i=1}^N \gets \mathbf{X}_{\text{new}}$ \Comment{Concatenate all new particles}
\State Set $m_{n+1}^i = M_{n+1}/N$ for all $i=1,\dots,N$
\end{algorithmic}
\end{algorithm}

\begin{algorithm}[H]
\caption{SGIP Method for RDA Equations}
\label{alg:main}
\begin{algorithmic}[1]
\Require $N$, $T$, $\Delta t$, $L$, $d$, $u_0(\mathbf{x})$, $\mathbf{v}(\mathbf{x},t)$, $D$, $r(u)$, $K$
\Ensure Estimated density $\{\hat{u}_n\}_{n=0}^{n_T}$

\State \textbf{Initialization:}
\State Sample $\mathbf{X}_0^i \sim u_0(\mathbf{x})/M_0$ for $i=1,\dots,N$
\State Set $m_0^i = M_0/N$ for $i=1,\dots,N$
\State $M_0 \gets \int_\Omega u_0(\mathbf{x}) d\mathbf{x}$

\For{$n=0$ \textbf{to} $n_T-1$}
    \State \textbf{Advection-Diffusion Step:}
    \State $\{\mathbf{X}_*^i\} \gets \text{Algorithm~\ref{alg:advection_diffusion}}(\{\mathbf{X}_n^i\}, \mathbf{v}, \Delta t)$
    
    \State \textbf{Reaction Step:}
    \State $\{\hat{u}_{n+1}^j\} \gets \text{Algorithm~\ref{alg:reaction}}(\{\mathbf{X}_*^i\}, \{m_n^i\}, r, \Delta t)$
    
    \State \textbf{Resampling Step:}
    \State $\{\mathbf{X}_{n+1}^i\}, \{m_{n+1}^i\} \gets \text{Algorithm~\ref{alg:resampling}}(\{\mathbf{X}_*^i\}, \{\hat{u}_{n+1}^j\})$
\EndFor
\State \textbf{return} $\{\hat{u}_n\}_{n=0}^{n_T}$
\end{algorithmic}
\end{algorithm}

Each time step requires $\mathcal{O}(N)$ operations for particle motion, $\mathcal{O}(N)$ for bin assignment, and $\mathcal{O}(|\mathcal{G}|)$ for density estimation and resampling. The overall complexity per time step is $\mathcal{O}(N + |\mathcal{G}|)$, making the method efficient for moderate-dimensional problems.

\begin{remark}[Potential for adaptive acceleration]\label{remark_alg}
For reaction-diffusion equations with FKPP-type nonlinearities where the solution remains bounded (\(\max_\mathbf{x} u(\mathbf{x},t) = 1\)), significant computational savings can be achieved. In future work, we will develop an adaptive acceleration technique that exploits the known propagation behavior of the equation. Specifically, regions where $u = 1$ (saturated regions) can be identified and excluded from particle-based computation, and particles are then only employed in the non-saturated transition regions where $u < 1$. This adaptive approach dramatically reduces the number of active bins $|\mathcal{G}_{\text{active}}| \ll |\mathcal{G}|$ required for density estimation and resampling, leading to substantial speedup, particularly in higher-dimensional problems where the curse of dimensionality would otherwise limit practical computations.
\end{remark}

\section{Error Analysis of the SGIP method for RDA Equation}\label{section_analysis}

In this section, we present a complete error analysis for the SGIP algorithm applied to RDA equations. The analysis systematically decomposes the total error into three main categories: the consistency error arising from operator splitting and spatial discretization, the propagation error governed by the stability of the evolution operators, and the stochastic numerical error introduced by density estimation and resampling. We establish precise bounds for each component and combine them to obtain the overall error estimate.

We recall the operator splitting framework for the RDA equation introduced in Eqs.\eqref{split}-\eqref{lie_split}:
\begin{equation}
\frac{\partial u}{\partial t} = \mathcal{A}u + \mathcal{R}u, \quad \text{where} \quad \mathcal{A}u = D\Delta u - \nabla \cdot (\mathbf{v} u), \quad \mathcal{R}u = r(u).
\end{equation}
Let $\mathcal{S}_{\mathcal{A}}(t)$ and $\mathcal{S}_{\mathcal{R}}(t)$ denote the exact solution semigroups generated by $\mathcal{A}$ and $\mathcal{R}$, respectively. The ideal Lie-Trotter splitting evolution over a time step $\Delta t$ is given by $\mathcal{S}_{\mathcal{R}}(\Delta t) \circ \mathcal{S}_{\mathcal{A}}(\Delta t)$.

Let $u(t)$ denote the exact solution of the RDA equation, and $\wt{u}_n$ denote the numerical SGIP solution at time $t_n = n\Delta t$. To establish precise error bounds, we introduce the following technical assumption:

\begin{assumption}[Regularity and boundedness]\label{ass:regularity}
The solution and operators satisfy:
\begin{enumerate}
    \item[(a)] \textbf{Initial regularity:} The initial data $u_0$ belongs to the Sobolev space $H^s(\Omega)$ with $s > \frac{d}{2} + 3$, ensuring sufficient spatial regularity for the error analysis.

    \item[(b)] \textbf{Velocity field regularity:} The velocity field $\mathbf{v}(\mathbf{x},t)$ is uniformly bounded and Lipschitz continuous in space. There exist constants $M_1$ and $L_v$ such that:
    \begin{equation}
    \|\mathbf{v}\|_{L^\infty(\Omega \times [0,T])} \leq M_1, \quad |\mathbf{v}(\mathbf{x},t) - \mathbf{v}(\mathbf{y},t)| \leq L_v |\mathbf{x} - \mathbf{y}|.
    \end{equation}

    \item[(c)] \textbf{Boundedness:} There exists a constant $C_u > 0$ such that the exact solution $u(t,\cdot)$ remains uniformly bounded in the interval $[0, C_u]$ for all $t \in [0,T]$, and maintains smoothness throughout the time evolution. (For the FKPP equation, $C_u = 1$ is the natural carrying capacity.)

    \item[(d)] \textbf{Reaction term regularity:} The reaction function $r(u)$ satisfies the Lipschitz condition
    \begin{equation}
    |r(u) - r(v)| \leq L_r |u - v| \quad \text{for all } u,v \in [0,C_u],
    \end{equation}
    and is twice continuously differentiable with bounded second derivative $|r''(u)| \leq M_2$, ensuring well-posedness of the reaction steps.

    \item[(e)] \textbf{Structural bound preservation:} The reaction function $r(u)$ satisfies
    \begin{equation}
    r(0) \geq 0 \quad \text{and} \quad r(u) \leq 0 \quad \text{for all } u \geq C_u.
    \end{equation}
\end{enumerate}
\end{assumption}

We first present the main convergence theorem, with its proof relying on the technical lemmas established later in this section.

\begin{theorem}[Global error bound]\label{thm:global}
Under Assumption \ref{ass:regularity}, for $n\Delta t \leq T$, the global error of the SGIP algorithm satisfies:
\begin{equation}\label{eq_thm}
\sup_{0\le n\le T/\Delta t} \mathbb{E}\,\|\wt{u}_{n} - u_n\|_{L^2}
\leq C_T \left( \Delta t + \frac{1}{\sqrt{N (\Delta x)^d\Delta t}} + \Delta x \right),
\end{equation}
where the constant $C_T$ depends on the final time $T$, the diffusion parameter $D$, the domain size $L$, the dimension $d$ and the solution regularity.
\end{theorem}

A direct consequence of Theorem~\ref{thm:global} is that, under Assumption \ref{ass:regularity}, the SGIP solution $\wt{u}_n$ converges to the exact solution $u_n$ in $L^2$-norm as $\Delta t, \Delta x \to 0$ and $N \to \infty$, provided that the scaling condition $N (\Delta x)^d \Delta t \to \infty$ is satisfied to ensure the vanishing of statistical variance.

\begin{remark}[Practical convergence and scaling]\label{remark:practical_convergence}
In practical simulations where $\Delta t$ and $\Delta x$ are typically fixed, the error bound \eqref{eq_thm} indicates that the statistical noise is the only term dependent on $N$. Crucially, as demonstrated by the numerical experiments in Section~\ref{section_num}, high accuracy is already attained with moderate values of $\Delta x$, $\Delta t$, and $N$, which is far less restrictive than those suggested by the theoretical analysis. This mild dependence on discretization parameters and sample size enables both robustness and efficiency, even in challenging 3D settings.
\end{remark}

Before presenting the detailed error estimates, we establish the boundedness of the expected total mass of the SGIP solution.

\begin{lemma}[Boundedness of Total Mass]\label{lem:boundedness}
Under Assumption \ref{ass:regularity}, the expected total mass of the SGIP solution remains bounded over the finite time interval $[0, T]$. Specifically, there exists a constant $C_M > 0$ such that for all $n\Delta t \leq T$:
\begin{equation}
\mathbb{E}[M_n] := \mathbb{E}\left[ \int_{\Omega} \wt{u}_n(\mathbf{x}) d\mathbf{x} \right] \leq C_M.
\end{equation}
\end{lemma}

\begin{proof}
The proof relies on the mass evolution properties of the algorithm.
Let $M_n$ be the total mass at step $n$.
Since the advection-diffusion step conserves mass exactly and the resampling step preserves mass in expectation, the evolution of the expected total mass is determined solely by the reaction dynamics.
From Assumption \ref{ass:regularity}(d)(e), we have $r(u) \leq C_u + L_r u$ for $u \ge 0$.
Thus, the growth of total mass is bounded by a linear function of the mass itself:
\begin{equation}
\frac{d}{dt} M(t) = \int_{\Omega} r(u) d\mathbf{x} \leq \int_{\Omega} (C_u + L_r u) d\mathbf{x} = C_u|\Omega| + L_r M(t).
\end{equation}
By Gronwall's inequality, the mass grows at most exponentially. Since we consider a finite time interval $[0, T]$, the expected mass $\mathbb{E}[M_n]$ remains uniformly bounded by a constant $C_M$ dependent on $T, \Omega, L_r$ and the initial mass $M_0$.
\end{proof}

We now analyze the local error of the SGIP by decomposing it into distinct components corresponding to different stages of the algorithm. To distinguish between the accumulation of numerical errors and the irreducible spatial resolution error, we introduce the projection of the exact solution onto the piecewise constant finite volume grid. Let $\bar{u}_n = \mathcal{P}_h u(t_n)$ denote the cell averages of the exact solution at time $t_n$. Unless otherwise stated, all subsequent norms $\|\cdot\|$ will refer to the $L^2$-norms.

We decompose the total error into dynamic and static parts:
\begin{equation}\label{u_total_error}
\|\tilde{u}_{n} - u_{n}\| \leq \underbrace{\|\tilde{u}_{n} - \bar{u}_{n}\|}_{\varepsilon_{\text{dyn}}} + \underbrace{\|\bar{u}_{n} - u_{n}\|}_{\varepsilon_{\text{proj}}}.
\end{equation}
Here, $\varepsilon_{\text{proj}}$ represents the projection error, which is bounded by $O(\Delta x)$ for smooth solutions and does not accumulate over time. We focus our analysis on the evolution of the dynamic error $\varepsilon_{\text{dyn}}$. To define the numerical error precisely, we introduce the ideal discrete evolution operator $\Phi_h(u) = S_{\mathcal{R}}(\Delta t)\mathcal{P}_h S_{\mathcal{A}}(\Delta t)u$, which represents the expected one-step evolution of the algorithm excluding stochastic effects. At time $t_{n+1}$, the error can be decomposed as:
\begin{align}\label{u_split_error}
\begin{aligned}
\|\tilde{u}_{n+1} \!-\! \bar{u}_{n+1}\|\!&\leq \!\underbrace{\|\tilde{u}_{n+1}\!-\!\Phi_h(\tilde{u}_n)\|}_{\varepsilon_{\text{num}}}\!+\! \underbrace{\|\Phi_h(\tilde{u}_n)\!-\!\Phi_h(\bar{u}_n)\|}_{\varepsilon_{\text{propag}}} \!+\! \underbrace{\|\Phi_h(\bar{u}_n)\!-\!\bar{u}_{n+1}\|}_{\varepsilon_{\text{consist}}}.
\end{aligned}
\end{align}
In this decomposition, $\varepsilon_{\mathrm{consist}}$ denotes the local consistency error, which measures the discrepancy between the ideal discrete scheme and the projection of the exact evolution. $\varepsilon_{\mathrm{propag}}$ represents the propagation error, reflecting the stability of the discrete operator acting on the error from the previous step. The term $\varepsilon_{\mathrm{num}}$ captures the stochastic numerical error, arising solely from the finite particle fluctuations around the ideal discrete operator.

The numerical implementation error $\varepsilon_{\mathrm{num}}$ can be further decomposed by tracing through the algorithmic stages. Let $\tilde{u}^{\mathrm{D}}_{n+1}$ denote the numerical density after particle advection-diffusion and binning, $\tilde{u}^{\mathrm{R}}_{n+1}$ the density after reaction integration, and $\tilde{u}_{n+1}$ the final numerical solution after resampling. Consistent with the definition of $\Phi_h$, the reference state for the density estimation is the projected advection $\mathcal{P}_h S_{\mathcal{A}}(\Delta t)\tilde{u}_n$. Then:
\begin{align}\label{u_numerical_error}
\varepsilon_{\mathrm{num}} 
&\leq \underbrace{\|S_{\mathcal{R}}(\Delta t)(\tilde{u}^{\mathrm{D}}_{n+1} - \mathcal{P}_h S_{\mathcal{A}}(\Delta t)\tilde{u}_n)\|}_{\varepsilon_{\mathrm{dens}}} \!+\! \underbrace{\|\tilde{u}^{\mathrm{R}}_{n+1} - S_{\mathcal{R}}(\Delta t)(\tilde{u}^{\mathrm{D}}_{n+1})\|}_{\varepsilon_{\mathrm{react}}} \!+\! \underbrace{\|\tilde{u}_{n+1} - \tilde{u}^{\mathrm{R}}_{n+1}\|}_{\varepsilon_{\mathrm{resamp}}}.
\end{align}
The three terms correspond to distinct error sources: $\varepsilon_{\mathrm{dens}}$ represents the statistical error of density estimation; $\varepsilon_{\mathrm{react}}$ corresponds to the ODE integration error; and $\varepsilon_{\mathrm{resamp}}$ captures the resampling noise introduced during particle redistribution.

The proof of Theorem \ref{thm:global} follows from the subsequent lemmas that bound each error component in Eqs.\eqref{u_total_error}\eqref{u_split_error} and \eqref{u_numerical_error}.

\begin{lemma}[Propagation error estimate]\label{lempropagation_error}
Under Assumption \ref{ass:regularity}, the propagation error of the ideal discrete operator $\Phi_h$ satisfies:
\begin{equation}\label{eq:improved_propagation_error}
\|\Phi_h(\tilde{u}_n) - \Phi_h(\bar{u}_n)\| \leq (1 + C_1\Delta t) \|\tilde{u}_n - \bar{u}_n\|,
\end{equation}
where $C_1$ depends on the Lipschitz constants of the advection-diffusion and reaction operators.
\end{lemma}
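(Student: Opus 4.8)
The plan is to exploit the composite structure of the exact Lie propagator and to treat its two factors separately: the linear advection-diffusion semigroup $S_{\mathcal{A}}(\Delta t)$ and the nonlinear reaction flow $S_{\mathcal{R}}(\Delta t)$. Writing $e_n = \tilde{u}_n - u_n$, I would first peel off the outer reaction factor and control it by the stability of the reaction flow, then absorb the inner advection-diffusion factor using its $L^2$-contractivity (Assumption \ref{ass:regularity}, item 4). Since $\mathcal{A}$ is linear, contractivity applies directly to the difference, so the advection-diffusion step contributes no growth; all of the $\Delta t$-growth in the bound will come from the reaction flow.

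For the reaction flow, the key observation is that $S_{\mathcal{R}}(\Delta t)$ acts pointwise in space: for fixed $\mathbf{x}$, $w(t) = (S_{\mathcal{R}}(t) w_0)(\mathbf{x})$ solves the scalar ODE $\dot{w} = r(w)$ with datum $w_0(\mathbf{x})$, which is well posed by the $C^2$ regularity in Assumption \ref{ass:regularity}(5). Given two data $w_0, v_0$, the difference $\delta(t) = w(t) - v(t)$ satisfies $\dot{\delta} = r(w) - r(v)$, so the Lipschitz bound $|r(w)-r(v)| \le L_r|w - v|$ and Grönwall's inequality yield the pointwise estimate $|\delta(\Delta t)| \le e^{L_r \Delta t}\,|w_0 - v_0|$. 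Squaring and integrating over $\Omega$ gives the $L^2$ stability of the reaction flow,
\[
\|S_{\mathcal{R}}(\Delta t) w_0 - S_{\mathcal{R}}(\Delta t) v_0\|_{L^2} \le e^{L_r \Delta t}\,\|w_0 - v_0\|_{L^2}.
\]
Applying this with $w_0 = S_{\mathcal{A}}(\Delta t)\tilde{u}_n$ and $v_0 = S_{\mathcal{A}}(\Delta t)u_n$, and then invoking $\|S_{\mathcal{A}}(\Delta t)e_n\|_{L^2} \le \|e_n\|_{L^2}$, I obtain $\|S_{\mathcal{R}}(\Delta t)S_{\mathcal{A}}(\Delta t)\tilde{u}_n - S_{\mathcal{R}}(\Delta t)S_{\mathcal{A}}(\Delta t)u_n\|_{L^2} \le e^{L_r \Delta t}\|e_n\|_{L^2}$. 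The stated form then follows from the elementary inequality $e^{L_r \Delta t} \le 1 + C_1 \Delta t$, valid for all $\Delta t \le \Delta t_0$ with $C_1 = L_r e^{L_r \Delta t_0}$. If one only has a growth bound $\|S_{\mathcal{A}}(t)\| \le e^{\omega t}$ (with $\omega$ controlled by $L_v$ and $\|\nabla\cdot\mathbf{v}\|_\infty$) rather than strict contractivity, the same argument produces $C_1 = \omega + L_r e^{(\omega+L_r)\Delta t_0}$, which is why $C_1$ is stated to depend on the Lipschitz constants of both operators.

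The main point requiring care is the range over which the Lipschitz estimate for $r$ is applied: Assumption \ref{ass:regularity}(5) only guarantees $|r(w)-r(v)| \le L_r|w-v|$ for $w,v \in [0,u_{\max}]$, so the pointwise Grönwall step is legitimate only if both reaction trajectories remain in $[0,u_{\max}]$. For the exact branch this holds by the boundedness in Assumption \ref{ass:regularity}(3); for the perturbed branch I would either (i) verify the invariance of $[0,1]$ under $S_{\mathcal{A}}(\Delta t)$ via a maximum principle for the advection-diffusion semigroup, together with the invariance of $[0,1]$ under the FKPP reaction map, so that $S_{\mathcal{A}}(\Delta t)\tilde{u}_n$ and its reaction image stay in range, or (ii) replace $r$ by a globally Lipschitz extension agreeing with $r$ on $[0,u_{\max}]$, which leaves the dynamics on the invariant set unchanged. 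I expect this invariance/range bookkeeping to be the only genuine obstacle; once it is secured, the remaining steps reduce to the standard Grönwall-plus-contractivity estimate sketched above.
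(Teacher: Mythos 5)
Your proposal is correct, and it follows the same overall skeleton as the paper's proof (factor the Lie propagator, establish a stability bound for each factor, multiply the two bounds), but the two factor estimates are handled differently in ways worth noting. For the advection-diffusion factor, you invoke the $L^2$-contractivity of $S_{\mathcal{A}}$ stated in Assumption \ref{ass:regularity}(4), so that factor contributes no growth and all of the $\Delta t$-growth comes from the reaction flow; the paper instead ignores its own contractivity assumption and re-derives a growth bound $(1+L_A\Delta t)$ by an energy estimate, taking the inner product of $\partial_t w = -\nabla\cdot(\mathbf{v}w) + D\Delta w$ with $w$. Your route is cleaner and also sidesteps a defect in the paper's energy estimate: the paper bounds the advection term by $G_1\|\mathbf{v}\|_{L^\infty}\|w\|_{L^2}^2 + \tfrac{D}{2}\|\Delta w\|_{L^2}^2$, but the diffusion term only supplies $-D\|\nabla w\|_{L^2}^2$, which cannot absorb $\|\Delta w\|_{L^2}^2$ (the standard Young-inequality argument produces $\tfrac{D}{2}\|\nabla w\|_{L^2}^2$; the paper's expression appears to be an error). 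For the reaction factor, your pointwise Gr\"onwall estimate $|\delta(\Delta t)| \le e^{L_r\Delta t}|w_0 - v_0|$ followed by squaring and integrating is equivalent to the paper's direct $L^2$ Gr\"onwall argument, and both yield the same $e^{L_r\Delta t} \le 1 + C_1\Delta t$ conclusion. Finally, your attention to the range of validity of the Lipschitz bound for $r$ --- verifying via a maximum principle or a globally Lipschitz extension that both reaction trajectories stay in $[0,u_{\max}]$ --- addresses a genuine gap that the paper's proof silently skips, since Assumption \ref{ass:regularity}(5) only gives the Lipschitz condition on that interval and the perturbed solution $\tilde{u}_n$ is a particle-histogram quantity not a priori confined to it.
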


\begin{proof}
Let $w = \tilde{u}_n - \bar{u}_n$. We trace the evolution of the norm difference through the operators.

First, apply advection-diffusion: $w_1 = S_{\mathcal{A}}(\Delta t)\tilde{u}_n - S_{\mathcal{A}}(\Delta t)\bar{u}_n$. Standard energy estimates for the advection-diffusion equation $\partial_t w_1 = -\nabla \cdot (\mathbf{v}w_1) + D\Delta w_1$ yield:
\begin{align}
\frac{1}{2}\frac{d}{dt}\|w_1\|^2 &= \langle w_1, -\nabla\cdot(\mathbf{v}w_1)\rangle + D\langle w_1, \Delta w_1\rangle \leq C \|\nabla \cdot \mathbf{v}\|_\infty \|w_1\|^2
\end{align}
Using Assumption \ref{ass:regularity}(b) and Gronwall's inequality, we obtain:
\begin{equation}
\|w_1\| \leq e^{K_A \Delta t} \|w\|,
\end{equation}
where $K_A$ depends on $M_1$ and $L_v$ defined in Assumption \ref{ass:regularity}(b).

Next, the operator $\mathcal{P}_h$ represents the $L^2$-orthogonal projection onto the finite volume space (piecewise constant functions). By the Cauchy-Schwarz inequality, for a mesh cell $B_j$ with volume $|B_j|$, we have:
\begin{equation}
    \int_{B_j} |\mathcal{P}_h w_1|^2 \, d\mathbf{x} 
    = |B_j| \left| \frac{1}{|B_j|} \int_{B_j} w_1(\mathbf{x}) \, d\mathbf{x} \right|^2 
    \leq \int_{B_j} |w_1(\mathbf{x})|^2 \, d\mathbf{x}.
\end{equation}
Summing over all cells $j$ yields the contraction property:
\begin{equation}
    \|\mathcal{P}_h w_1\|^2 = \sum_j \int_{B_j} |\mathcal{P}_h w_1|^2 \, d\mathbf{x} \leq \sum_j \int_{B_j} |w_1|^2 \, d\mathbf{x} = \|w_1\|^2.
\end{equation}

For the reaction step, let $w_2 = \Phi_h(\tilde{u}_n) - \Phi_h(\bar{u}_n)$. By using the Lipschitz property of $r(u)$ with constant $L_r$ (Assumption \ref{ass:regularity}(d)) and Gronwall's inequality, we have:
\begin{align}
\|w_2\| \leq e^{L_r \Delta t} \|\mathcal{P}_h w_1\|.
\end{align}

Combining the estimates from all three steps, we obtain for sufficiently small $\Delta t$ that
\begin{align}
\|\Phi_h(\tilde{u}_n) - \Phi_h(\bar{u}_n)\| 
&\leq e^{(K_A + L_r)\Delta t} \|\tilde{u}_n - \bar{u}_n\| \nonumber \\
&\leq (1 + C_1\Delta t) \|\tilde{u}_n - \bar{u}_n\|,
\end{align}
where $C_1$ is a constant depending on the Lipschitz constants of the advection-diffusion and reaction operators. Thus, the proof is complete.
\end{proof}

\begin{lemma}[Consistency error estimate]\label{lem:splitting_error}
Under Assumption \ref{ass:regularity}, the local consistency error between the ideal discrete scheme and the projected exact evolution satisfies:
\begin{equation}\label{splitting_error}
\varepsilon_{\mathrm{consist}} = \|\Phi_h(\bar{u}_n) - \bar{u}_{n+1}\| \leq C_2(\Delta t)^2 + C_3 \Delta t \Delta x,
\end{equation}
where $C_2$ and $C_3$ depend on the solution regularity and operator bounds.
\end{lemma}

\begin{proof}
Recall $\Phi_h = S_{\mathcal{R}} \mathcal{P}_h S_{\mathcal{A}}$ and $\bar{u}_{n+1} = \mathcal{P}_h S_{\mathcal{R}+\mathcal{A}} u_n$. To facilitate the analysis, we introduce the intermediate reference state $\bar{u}_{n+1}^* = \mathcal{P}_h S_{\mathcal{R}+\mathcal{A}}(\Delta t) \bar{u}_n$, which represents the projected exact evolution starting from $\bar{u}_n$. We decompose the total consistency error into three components:
\begin{align}\label{eq:consistency_decomp}
\varepsilon_{\mathrm{consist}} &\leq \| \Phi_h(\bar{u}_n) - \mathcal{P}_h S_{\mathcal{R}}S_{\mathcal{A}}(\Delta t) \bar{u}_n \| + \| \mathcal{P}_h S_{\mathcal{R}}S_{\mathcal{A}}(\Delta t) \bar{u}_n - \bar{u}_{n+1}^* \| + \| \bar{u}_{n+1}^* - \bar{u}_{n+1} \|.
\end{align}
\textbf{(1) Time Splitting Error.}
The second term represents the classical Lie-Trotter splitting error. From Lemma \ref{lempropagation_error}, we have:
\begin{equation}
\| \mathcal{P}_h S_{\mathcal{R}}S_{\mathcal{A}}(\Delta t) \bar{u}_n - \mathcal{P}_h S_{\mathcal{R}+\mathcal{A}}(\Delta t) \bar{u}_n \| \leq \|S_{\mathcal{R}}S_{\mathcal{A}}(\Delta t)\bar{u}_n - S_{\mathcal{R}+\mathcal{A}}(\Delta t)\bar{u}_n\|
\end{equation}
By the Baker-Campbell-Hausdorff formula, this error is bounded by the commutator of the operators, which is defined as:
\begin{equation}
    [\mathcal{R},\mathcal{A}]u = r'(u)\mathcal{A}u - \mathcal{A}(r(u))
\end{equation}
Expanding the term $\mathcal{A}(r(u))$ using the chain rule reveals that the non-vanishing commutativity arises primarily from spatial derivatives of the nonlinearity, introducing terms proportional to $r''(u)|\nabla u|^2$. Invoking Assumption \ref{ass:regularity}(b) for velocity boundedness, Assumption \ref{ass:regularity}(d) for the bound $|r''(u)| \leq M_2$, and the regularity of the solution, we obtain:
\begin{equation}\label{split_error}
\|S_{\mathcal{R}}S_{\mathcal{A}}\bar{u}_n - S_{\mathcal{R}+\mathcal{A}}\bar{u}_n\| \leq \frac{1}{2}(\Delta t)^2 \|[\mathcal{R}, \mathcal{A}]\bar{u}_n\| \leq C_{\text{split}} (\Delta t)^2.
\end{equation}
Consequently, the splitting error is bounded by $C_{\text{split}}(\Delta t)^2$, where $C_{\text{split}}$ depends on $D$, the operator norms, and solution regularity. 


\textbf{(2) Spatial Consistency Error.}
The first term in Eq.\eqref{eq:consistency_decomp} quantifies the error arising from the discretization of the reaction step on the grid. Specifically, it measures the discrepancy between applying the reaction operator on the projected density versus projecting the reacted density.

Let $v(\mathbf{x}) = S_{\mathcal{A}}(\Delta t)\bar{u}_n(\mathbf{x})$ denote the density field after the exact advection-diffusion step but before projection. The spatial consistency error is exactly:
\begin{align}
\| \Phi_h(\bar{u}_n) - \mathcal{P}_h S_{\mathcal{R}}S_{\mathcal{A}}(\Delta t) \bar{u}_n \| &= \| S_{\mathcal{R}}(\Delta t)(\mathcal{P}_h v) - \mathcal{P}_h (S_{\mathcal{R}}(\Delta t) v) \|.
\end{align}
Since the reaction function $r(u)$ is twice continuously differentiable (Assumption \ref{ass:regularity}(d)), we can expand the exact reaction evolution $S_{\mathcal{R}}(\Delta t)u$ via Taylor series with an integral remainder. For any state $w$, we have:
\begin{equation}
    S_{\mathcal{R}}(\Delta t)w = w + \Delta t r(w) + \mathcal{E}_t(w),
\end{equation}
where $\|\mathcal{E}_t(w)\| \leq K_r (\Delta t)^2$ and the constant $K_r$ depends on the bounds of $r$ and its derivatives.

Applying this expansion to both terms in the error expression:
\begin{align}
S_{\mathcal{R}}(\Delta t)(\mathcal{P}_h v) &= \mathcal{P}_h v + \Delta t r(\mathcal{P}_h v) + \mathcal{E}_t(\mathcal{P}_h v), \\
\mathcal{P}_h (S_{\mathcal{R}}(\Delta t) v) &= \mathcal{P}_h \left( v + \Delta t r(v) + \mathcal{E}_t(v) \right) = \mathcal{P}_h v + \Delta t \mathcal{P}_h r(v) + \mathcal{P}_h \mathcal{E}_t(v).
\end{align}
Subtracting the two equations and using the triangle inequality yields:
\begin{align}
\| \!S_{\mathcal{R}}(\Delta t)(\mathcal{P}_hv) \!-\! \mathcal{P}_h (S_{\mathcal{R}}(\Delta t) v) \|\!\leq\! \Delta t \| r(\mathcal{P}_h v) \!-\! \mathcal{P}_h r(v) \| \!+\! \| \mathcal{E}_t(\mathcal{P}_h v) \| \!+\! \| \mathcal{P}_h \mathcal{E}_t(v) \|.
\end{align}
The first term $\| r(\mathcal{P}_h v) - \mathcal{P}_h r(v) \|$ represents the commutator between the nonlinear reaction function and the projection operator. Using the Lipschitz property of $r(u)$ with constant $L_r$ and the approximation properties of the projection $\mathcal{P}_h$, this term is bounded by
\begin{equation}
    \| r(\mathcal{P}_h v) - \mathcal{P}_h r(v) \| \leq C_P \Delta x \|\nabla r(v)\| \leq C_P L_r \Delta x \|\nabla v\|.
\end{equation}
Crucially, $v = S_{\mathcal{A}}(\Delta t)\bar{u}_n$ is the result of the advection-diffusion operator acting on the piecewise constant function $\bar{u}_n$. Due to the smoothing property of the parabolic operator for $t > 0$, $v$ belongs to $H^1(\Omega)$. Moreover, given the smoothness of the exact solution (Assumption \ref{ass:regularity}), $\|\nabla v\|$ remains uniformly bounded.


For the remainder terms, since the projection operator is non-expansive in $L^2$, we have:
\begin{equation}
    \| \mathcal{E}_t(\mathcal{P}_h v) \| + \| \mathcal{P}_h \mathcal{E}_t(v) \| \leq 2 K_r (\Delta t)^2.
\end{equation}
Combining these bounds, we obtain:
\begin{equation}\label{spatial_error}
\| \Phi_h(\bar{u}_n) - \mathcal{P}_h S_{\mathcal{R}}S_{\mathcal{A}}(\Delta t) \bar{u}_n \| \leq C_{\text{spatial}} \Delta t \Delta x + 2K_r (\Delta t)^2,
\end{equation}
where $C_{\text{spatial}} = C_P L_r \|\nabla u\|_{L^\infty(0,T; L^2)}$.

\textbf{(3) Projection Stability Error.}
The last term accounts for the evolution of the initial projection error over one time step. Using the first-order Taylor expansion of the exact evolution operator $S_{\mathcal{R}+\mathcal{A}}(\Delta t) = I + \Delta t (\mathcal{R+A}) + O(\Delta t^2)$, we have:
\begin{align}\label{pro_error}
\|\bar{u}_{n+1}^* - \bar{u}_{n+1}\| &= \|\mathcal{P}_h ((I + \Delta t \mathcal{L})\bar{u}_n - (I + \Delta t \mathcal{L})u_n) + O(\Delta t^2)\| \nonumber \\
&\leq \|\mathcal{P}_h(\bar{u}_n - u_n)\| + \Delta t \|\mathcal{P}_h \mathcal{L}(\bar{u}_n - u_n)\| + C_{\text{rem}}(\Delta t)^2.
\end{align}
where we define $\mathcal{L}=\mathcal{R+A}$. Crucially, since $\bar{u}_n = \mathcal{P}_h u_n$ is the projection of $u_n$, we have $\mathcal{P}_h(\bar{u}_n - u_n) = \mathcal{P}_h(\mathcal{P}_h u_n - u_n) = 0$.
The remaining term is bounded by the Lipschitz continuity of the generator $\mathcal{L}$ and the projection error bound $\|\bar{u}_n - u_n\| \leq C \Delta x$:
\begin{equation}
\Delta t \|\mathcal{P}_h \mathcal{L}(\bar{u}_n - u_n)\| \leq C_{\mathcal{L}} \Delta t \|\bar{u}_n - u_n\| \leq C_{\text{stab}} \Delta t \Delta x.
\end{equation}

Combining the estimates from Eqs.\eqref{split_error}\eqref{spatial_error}\eqref{pro_error}, we obtain:
\begin{align}
\varepsilon_{\mathrm{consist}} &\leq  (C_{\text{split}} + 2K_r + C_{\text{rem}})(\Delta t)^2 + (C_{\text{spatial}} + C_{\text{stab}}) \Delta t \Delta x.
\end{align}
By defining constants $C_2=C_{\text{split}} + 2K_r + C_{\text{rem}}$ and $C_3=C_{\text{spatial}} + C_{\text{stab}}$, we obtain the stated bound in Eq.\eqref{splitting_error}.
\end{proof}

\begin{lemma}[Statistical density estimation error]\label{lem:density_error}
Under Assumption \ref{ass:regularity}, the density estimation step is unbiased with respect to the ideal discrete state, and its mean squared error satisfies:
\begin{equation}\label{density_error}
\mathbb{E}\left[\|\varepsilon_{\mathrm{dens}}\|^2\right] = \mathbb{E}\!\left[ \big\| S_{\mathcal{R}}(\Delta t)(\wt{u}^{\mathrm{D}}_{n+1}) 
- S_{\mathcal{R}}(\Delta t)(\mathcal{P}_h S_{\mathcal{A}}(\Delta t)\wt{u}_n) \big\|^2 \right] \leq \frac{C_4}{N (\Delta x)^d},
\end{equation}
where $C_4$ depends on the domain size and the uniform bound of the solution.
\end{lemma}

\begin{proof}
Let $v(\mathbf{x}) = S_{\mathcal{A}}(\Delta t)\wt{u}_n(\mathbf{x})$ denote the advected-diffused density field. The reference state is its projection $\bar{v}_j = \frac{1}{(\Delta x)^d} \int_{\text{bin}_j} v(\mathbf{x}) d\mathbf{x}$. The numerical density in bin $j$, $\hat{u}_j$, is constructed from $N$ independent particle positions $\{\mathbf{X}_i\}_{i=1}^N$ sampled from the distribution $v(\mathbf{x})/M$, where $M$ is the total mass.

Let $I_j$ be the number of particles in bin $j$. The variable $I_j$ follows a binomial distribution $I_j \sim \text{Binomial}(N, p_j)$, where the probability $p_j$ is given by:
\begin{equation}
p_j = \int_{\text{bin}_j} \frac{v(\mathbf{x})}{M} d\mathbf{x} = \frac{(\Delta x)^d \bar{v}_j}{M}.
\end{equation}

The estimator is defined as $\hat{u}_j = \frac{M I_j}{N (\Delta x)^d}$. Taking the expectation:
\begin{equation}
\mathbb{E}[\hat{u}_j] = \frac{M}{N (\Delta x)^d} \mathbb{E}[I_j] = \frac{M}{N (\Delta x)^d} (N p_j) = \frac{M}{(\Delta x)^d} \frac{(\Delta x)^d \bar{v}_j}{M} = \bar{v}_j.
\end{equation}
Thus, $\mathbb{E}[\hat{u}_j] - \bar{v}_j = 0$, confirming the estimator is unbiased with respect to the projected state.

The Mean Squared Error is the sum of variances in each bin:
\begin{equation}
\mathbb{E}[\|\wt{u}^{\mathrm{D}}_{n+1} - \bar{v}\|^2] = \sum_{j \in \mathcal{G}} (\Delta x)^d \mathrm{Var}(\hat{u}_j).
\end{equation}
Using the variance of the binomial distribution $\mathrm{Var}(I_j) = N p_j (1 - p_j)$:
\begin{align}
\mathrm{Var}(\hat{u}_j) = \frac{M^2}{N^2 (\Delta x)^{2d}} N p_j (1 - p_j) \leq \frac{M^2}{N (\Delta x)^{2d}} p_j.
\end{align}
Substituting $p_j = \frac{(\Delta x)^d \bar{v}_j}{M}$ into the sum, we have:
\begin{align}
\mathbb{E}[\|\wt{u}^{\mathrm{D}}_{n+1} - \bar{v}\|^2] \leq \sum_{j \in \mathcal{G}} (\Delta x)^d \left( \frac{M^2}{N (\Delta x)^{2d}} \frac{(\Delta x)^d \bar{v}_j}{M} \right) \leq \frac{M^2}{N (\Delta x)^d}.
\end{align}

Since the reaction term satisfies Assumption~\ref{ass:regularity}(d) with Lipschitz constant $L_r$, the associated solution operator $S_{\mathcal{R}}(\Delta t)$ is Lipschitz continuous on $L^2(\Omega)$ with constant $L_{\mathrm{R}} = e^{L_r \Delta t} \leq e^{L_r T}$. Therefore,
\[
\mathbb{E}\!\left[ \varepsilon_{\mathrm{dens}}^2 \right] 
\leq L_{\mathrm{R}}^2 \, \mathbb{E}\!\left[ \big\| \wt{u}^{\mathrm{D}}_{n+1} - \mathcal{P}_h v \big\|^2 \right]
\leq \frac{C_4}{N (\Delta x)^d},
\]
where $C_4 = e^{2 L_r T} (C_M)^2$ depends on the final time $T$ and the mass bound $C_M$ established in Lemma \ref{lem:boundedness}.
\end{proof}

\begin{lemma}[Reaction integration error]\label{lem:reaction_error}
Under Assumption \ref{ass:regularity}, the reaction integration error satisfies:
\begin{equation}\label{reaction_error}
\varepsilon_{\mathrm{react}} =\|\tilde{u}^{\mathrm{R}}_{n+1} - S_{\mathcal{R}}(\Delta t)(\tilde{u}^{\mathrm{D}}_{n+1})\| \leq C_5 (\Delta t)^{p+1},
\end{equation}
where $p$ is the order of the time integration scheme ($p \to \infty$ for exact integration).
\end{lemma}

\begin{proof}
Let $\Psi_{\Delta t}(\cdot)$ denote the numerical integration operator over one time step $\Delta t$, and $S_{\mathcal{R}}(\Delta t)$ be the exact reaction operator. The local truncation error for a $p$-th order method applied to the ODE $\dot{u} = r(u)$ is given pointwise by:
\begin{equation}
| \Psi_{\Delta t}(u(\mathbf{x})) - S_{\mathcal{R}}(\Delta t)u(\mathbf{x}) | \leq C_r (\Delta t)^{p+1},
\end{equation}
where $C_r$ depends on the $(p+1)$-th derivative of the solution, which is bounded due to the smoothness of $r(u)$ and the boundedness of $u$ (Assumption \ref{ass:regularity} (c)(d)).

The $L^2$-norm of the error is:
\begin{align}
\varepsilon_{\mathrm{react}}^2 &= \int_{\Omega} | \Psi_{\Delta t}(\tilde{u}^D(\mathbf{x})) - S_{\mathcal{R}}(\Delta t)\tilde{u}^D(\mathbf{x}) |^2 d\mathbf{x} \leq  |\Omega| C_r^2 (\Delta t)^{2(p+1)}.
\end{align}
Taking the square root, we obtain:
\begin{equation}
\varepsilon_{\mathrm{react}} \leq C_5 (\Delta t)^{p+1}.
\end{equation}
where the constant $C_5 = \sqrt{|\Omega|} C_r$.
\end{proof}

\begin{lemma}[Resampling error]\label{lem:resampling_error}
Under Assumption \ref{ass:regularity}, the resampling step introduces a mean-zero stochastic error with variance satisfying:
\begin{equation}\label{resampling_error}
\mathbb{E}[\|\varepsilon_{\mathrm{resamp}}\|^2] = \mathbb{E}[\|\wt{u}_{n+1} - \wt{u}^{\mathrm{R}}_{n+1}\|^2] \leq \frac{C_6}{N (\Delta x)^d},
\end{equation}
where the constant $C_6$ depends on the domain size and dimension.
\end{lemma}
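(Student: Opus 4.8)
The plan is to reduce the resampling error to the fluctuation of a multinomial count around its mean and then invoke the standard multinomial variance formula. First I would write both densities in bin-wise piecewise-constant form. Since $\wt{u}^{\mathrm{R}}_{n+1}$ is the histogram taking the value $\hat{u}_{n+1}^j = M_{n+1}\,p_j/(\Delta x)^d$ on $\mathrm{bin}_j$ (by the definition of $p_j$ in Algorithm~\ref{alg:resampling}), and the resampled solution $\wt{u}_{n+1}$ assigns the uniform mass $m_{n+1}^i = M_{n+1}/N$ to each of the $N$ resampled particles, the key observation is that every branch of Algorithm~\ref{alg:resampling} (sampling without replacement, with replacement, or uniform fill of an empty bin) places exactly $n_j$ particles inside $\mathrm{bin}_j$. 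Hence the reconstructed histogram value of $\wt{u}_{n+1}$ on $\mathrm{bin}_j$ is precisely $M_{n+1}\,n_j/(N(\Delta x)^d)$, and the intra-bin positions are irrelevant because the density is constant on each bin. The per-bin difference is therefore $\frac{M_{n+1}}{(\Delta x)^d}\bigl(\frac{n_j}{N}-p_j\bigr)$.

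Second, assembling the $L^2$ norm over all bins gives
\[
\|\wt{u}_{n+1} - \wt{u}^{\mathrm{R}}_{n+1}\|_{L^2}^2 = \frac{M_{n+1}^2}{(\Delta x)^d}\sum_{j=1}^{M^d}\left(\frac{n_j}{N} - p_j\right)^2 .
\]
I would condition on the $\sigma$-algebra $\mathcal{F}_n$ generated by the algorithm through the reaction step, so that $M_{n+1}$ and the probabilities $p_j$ are measurable and $(n_1,\dots,n_{M^d}) \sim \mathrm{Multinomial}(N, p_1,\dots,p_{M^d})$. The multinomial marginals satisfy $\mathbb{E}[n_j \mid \mathcal{F}_n] = N p_j$ and $\mathrm{Var}(n_j \mid \mathcal{F}_n) = N p_j(1-p_j)$, whence $\mathbb{E}\bigl[(n_j/N - p_j)^2 \mid \mathcal{F}_n\bigr] = p_j(1-p_j)/N$. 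Summing and using $\sum_j p_j(1-p_j) \le \sum_j p_j = 1$ yields the conditional second-moment bound $\mathbb{E}\bigl[\|\wt{u}_{n+1} - \wt{u}^{\mathrm{R}}_{n+1}\|_{L^2}^2 \mid \mathcal{F}_n\bigr] \le M_{n+1}^2/(N(\Delta x)^d)$.

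Finally I would pass from the second moment to the first moment by conditional Jensen's inequality, obtaining $\mathbb{E}\bigl[\|\wt{u}_{n+1} - \wt{u}^{\mathrm{R}}_{n+1}\|_{L^2} \mid \mathcal{F}_n\bigr] \le M_{n+1}\,(N(\Delta x)^d)^{-1/2}$, then take the outer expectation and bound the random total mass $M_{n+1}$ by a uniform constant $M$ furnished by the boundedness part of Assumption~\ref{ass:regularity} (the density stays controlled on the bounded domain $\Omega=[-L,L]^d$, so $M_{n+1}\le (2L)^d\,\|\hat{u}_{n+1}\|_{L^\infty}$). This produces $C_6$ depending only on $L$, $d$, and $M$, as claimed. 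The only genuinely delicate point — and the part I would be most careful to justify — is verifying that the number of particles landing in each bin after resampling equals the multinomial draw $n_j$ despite the three case distinctions in Algorithm~\ref{alg:resampling}; once that accounting is pinned down, the remainder is the textbook multinomial variance estimate. I would also remark that the crude step $(1-p_j)\le 1$ is harmless here, since only the $(N\Delta x^d)^{-1/2}$ rate is needed and no sharper constant is sought.
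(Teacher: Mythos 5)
Your proposal is correct and follows essentially the same route as the paper's proof: both reduce the resampling error to the fluctuation of the multinomial bin counts $n_j$ about their means $Np_j$, apply the multinomial variance formula, and pass from the second moment to the first moment via Jensen's inequality. The only differences are cosmetic refinements — you bound $\sum_j p_j(1-p_j)\le 1$ directly where the paper bounds each $p_j$ by $\|\wt{u}^{\mathrm{R}}_{n+1}\|_{L^\infty}(\Delta x)^d/M$ and sums bin volumes to get $(2L)^d$ (both give the same $(N\Delta x^d)^{-1/2}$ rate), and your explicit conditioning on $\mathcal{F}_n$ plus the branch-by-branch check that every bin receives exactly $n_j$ particles make rigorous two points the paper leaves implicit.
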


\begin{proof}
Let $w_j = \tilde{u}^{\mathrm{R}}_{n+1, j}$ be the density value in bin $j$ before resampling. The resampling process draws $N$ particles based on the mass distribution. The probability of a new particle falling into bin $j$ is:
\begin{equation}
q_j = \frac{w_j (\Delta x)^d}{M}, \quad \text{where } M = \sum_{k} w_k (\Delta x)^d.
\end{equation}
Let $K_j$ be the number of particles in bin $j$ after resampling. The vector $(K_1, \dots, K_{|\mathcal{G}|})$ follows a multinomial distribution with parameters $N$ and probabilities $\{q_j\}$.
The resampled density is $\tilde{u}_{n+1, j} = \frac{M K_j}{N (\Delta x)^d}$.

Since $\mathbb{E}[K_j] = N q_j$, we have:
\begin{equation}
\mathbb{E}[\tilde{u}_{n+1, j}] = \frac{M}{N (\Delta x)^d} (N q_j) = \frac{M}{(\Delta x)^d} \frac{w_j (\Delta x)^d}{M} = w_j.
\end{equation}
Thus, $\mathbb{E}[\tilde{u}_{n+1}] = \tilde{u}^{\mathrm{R}}_{n+1}$.

The variance of the count in bin $j$ is $\mathrm{Var}(K_j) = N q_j (1 - q_j)$. The expected squared $L^2$ error is:
\begin{align}
\mathbb{E}[\|\tilde{u}_{n+1} - \tilde{u}^{\mathrm{R}}_{n+1}\|^2] 
= \frac{M^2}{N (\Delta x)^d} \sum_{j \in \mathcal{G}} q_j (1 - q_j) \leq \frac{C_6}{N (\Delta x)^d}.
\end{align}
where the constant $C_6 = (C_M)^2$, consistent with the mass bound $C_M$ in Lemma \ref{lem:boundedness}.
\end{proof}

Now, we are ready to prove Theorem \ref{thm:global}.

\textit{Proof of Theorem \ref{thm:global}.}
Recall the decomposition of the global error into the dynamic discrete error and the static projection error:
\begin{equation}
\|\wt{u}_{n} - u_n\| \leq \|\wt{u}_{n} - \bar{u}_n\| + \|\bar{u}_{n} - u_n\|.
\end{equation}
The static projection error is bounded by $\|\bar{u}_{n} - u_n\| \leq C_{7} \Delta x$. We define the mean squared dynamic error as $E_n = \mathbb{E}[\|\wt{u}_{n} - \bar{u}_n\|^2]$.

Using the error decomposition in Eq.\eqref{u_split_error} and introducing the ideal discrete operator $\Phi_h$, the error evolution at step $(n+1)$ is:
\begin{equation}
e_{n+1} = \wt{u}_{n+1} - \bar{u}_{n+1} = (\Phi_h(\wt{u}_n) - \Phi_h(\bar{u}_n)) + \varepsilon_\mathrm{consist} + \varepsilon_{\mathrm{num}}.
\end{equation}
Crucially, from Eq.\eqref{u_numerical_error}, the stochastic numerical error $\varepsilon_{\mathrm{dens}} + \varepsilon_{\mathrm{resamp}}$ (comprising density estimation variance and resampling noise) has zero mean conditional on the state at time $t_n$. Therefore, the cross-term in the squared norm expansion vanishes in expectation:
\begin{align}
E_{n+1} &= \mathbb{E}\|\Phi_h(\wt{u}_n) - \Phi_h(\bar{u}_n) + \varepsilon_{\mathrm{consist}} + \varepsilon_{\mathrm{num}} \|^2 \nonumber \\
&= \mathbb{E}\| \Phi_h(\wt{u}_n) - \Phi_h(\bar{u}_n) + \varepsilon_{\mathrm{consist}} + \varepsilon_{\mathrm{react}} \|^2 + \mathbb{E}\| \varepsilon_{\mathrm{dens}} + \varepsilon_{\mathrm{resamp}} \|^2.
\end{align}

We combine the consistency error from Lemma \ref{lem:splitting_error}, the reaction error from Lemma \ref{lem:reaction_error}. At this stage, we assume a practical first-order time integration scheme ($p=1$) is used for the reaction step, which yields $\|\varepsilon_{\mathrm{react}}\| \leq C_5 (\Delta t)^2$. Using the inequality $\|a+b\|^2 \leq (1+\Delta t)\|a\|^2 + (1+\frac{1}{\Delta t})\|b\|^2$, Lemma \ref{lempropagation_error}, and Lemma \ref{lem:splitting_error}, we obtain:
\begin{align}
&\mathbb{E}\| \Phi_h(\wt{u}_n) - \Phi_h(\bar{u}_n) + \varepsilon_{\mathrm{consist}} + \varepsilon_{\mathrm{react}} \|^2 \nonumber \\
&\leq (1+\Delta t) \mathbb{E}\| \Phi_h(\wt{u}_n) - \Phi_h(\bar{u}_n) \|^2 + \left(1+\frac{1}{\Delta t}\right) \|\varepsilon_{\mathrm{consist}}+ \varepsilon_{\mathrm{react}}\|^2 \nonumber \\
&\leq (1+\Delta t) (1+C_1 \Delta t)^2 E_n + \frac{2}{\Delta t} \left( (C_2+C_5)(\Delta t)^2 + C_3 \Delta t \Delta x \right)^2 \nonumber \\
&\leq (1 + C_8 \Delta t) E_n + C_9 \Delta t (\Delta t + \Delta x)^2,
\end{align}
where we have simplified terms for small $\Delta t$.

Combining Lemma \ref{lem:density_error} and Lemma \ref{lem:resampling_error}, the total variance is:
\begin{align}
\mathbb{E}\| \varepsilon_{\mathrm{dens}} + \varepsilon_{\mathrm{resamp}} \|^2 \leq \frac{C_4 + C_6}{N (\Delta x)^d} = \frac{C_{10}}{N (\Delta x)^d}.
\end{align}

Substituting these bounds back into the recurrence for $E_n$:
\begin{align}
E_{n+1} \leq (1 + C_8 \Delta t) E_n + C_9 \Delta t (\Delta t + \Delta x)^2 + \frac{C_{10}}{N (\Delta x)^d}.
\end{align}
Since the initial condition is sampled such that the initial bin masses match the projection, $E_0 \approx 0$ (or is bounded by initial sampling variance $O(1/N)$). By the discrete Gronwall's inequality, for $n\Delta t \leq T$:
\begin{align}
E_n \leq e^{C_8 T} \left( C_9 (\Delta t + \Delta x)^2 + \frac{C_{10}}{N (\Delta x)^d \Delta t} \right).
\end{align}

Finally, taking the square root to obtain the $L^2$-norm estimate, and adding the static projection error $\|\bar{u}_n - u_n\| \leq C_{10} \Delta x$:
\begin{align}
\mathbb{E}\|\wt{u}_{n} - u_n\| &\leq \sqrt{E_n} + C_{7} \Delta x \leq C_T \left( \Delta t + \frac{1}{\sqrt{N (\Delta x)^d \Delta t}} + \Delta x \right).
\end{align}
where $C_T = e^{\frac{1}{2}C_8 T} \max\left\{ \sqrt{C_9}, \sqrt{C_{10}} \right\} + C_7$. This completes the proof of Theorem \ref{thm:global}.

\section{Numerical Experiments}\label{section_num}
This section presents a series of numerical experiments designed to validate the accuracy, robustness, and computational efficiency of the proposed SGIP method. The validation begins with nonlinear reaction kinetics in 1D systems in Subsection \ref{subsection:1d}, extends to pattern formation in 2D steady flows in Subsection \ref{subsection:2d}, and concludes in Subsection \ref{subsection:3d} by showcasing the method’s scalability in 3D ABC flow dynamics across varying diffusion regimes and reaction models.

All the experiments presented in this work were performed on the HPC2021 system at the University of Hong Kong, which is equipped with 16-core Intel Xeon 6226R processors and an NVIDIA Tesla V100 32GB SXM2 GPU.
\subsection{1D Numerical Experiments}\label{subsection:1d}
In this section, we present 1D numerical experiments to validate the robustness of the SGIP method for solving reaction-diffusion equations. We consider three different nonlinear reaction terms and compare the results with traditional FDM. 

The experiments in 1D are conducted on the computational domain $\Omega = [-L, L]$ with $L = 60$. The initial condition is given by a uniform distribution on the interval $[0,1]$:
\begin{equation}
u_0(x) = \chi_{[0,1]}(x) = \begin{cases} 
1 & \text{for } x \in [0, 1], \\
0 & \text{otherwise},
\end{cases}
\end{equation}
where $\chi_{[0,1]}$ denotes the characteristic function of the interval $[0,1]$. This discontinuous initial profile tests the method's capability to handle sharp gradients and evolving interfaces.

For the reference FDM simulations, we employ a high-resolution discretization with spatial step $\delta x = 10^{-3}$ and temporal step $\delta t = 10^{-3}$ to ensure accurate reference solutions. For the SGIP, we use $N = 10^6$ particles to approximate the density distribution, with the spatial domain discretized into $K = 150$ bins for density estimation, corresponding to a bin width of $\Delta x = 0.8$. The time step is set to $\Delta t = 0.5$.


We begin by considering three distinct nonlinear reaction terms. The classical FKPP equation, \(u_t = u_{xx} + u(1-u)\), serves as the baseline. Figure \ref{uxx+u(1-u)} shows that the SGIP results exhibit excellent agreement with the FDM reference, and the wave front propagates with a constant shape, confirming the characteristic traveling wave structure.

Next, a cubic nonlinearity, \(u_t = u_{xx} + u^2(1-u)\), is examined. As shown in Figure \ref{uxx+u2(1-u)}, this modification leads to a visibly slower propagation speed compared to the standard FKPP case. Finally, we test an Arrhenius-type reaction term, \(u_t = u_{xx} + e^{-E/u}(1-u)\) with \(E = 1/2\), which introduces a singularity near \(u = 0\). Despite this numerical challenge, the SGIP method performs satisfactorily. Figure \ref{uxx+exp(1-u)} shows that the wave front propagates at a notably reduced speed while maintaining a coherent profile, further validating the robustness of the method for complex nonlinear reactions.

\begin{figure}[htbp]
    \centering
    \vspace{-0.2cm}
    \includegraphics[width=0.7\textwidth]{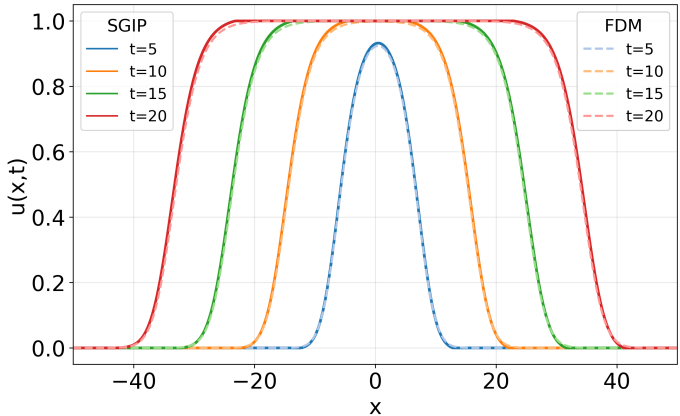}
        \caption{Comparison of FDM and SGIP method at $u_t = u_{xx} + u(1-u)$}
   \label{uxx+u(1-u)} 
\end{figure}



\begin{figure}[htbp]
    \centering
    \vspace{-0.2cm}
    \includegraphics[width=0.7\textwidth]{picture/kpp_comparison_u2_all_times.pdf}
   \caption{Comparison of FDM and SGIP method at $u_t = u_{xx} + u^2(1-u)$}
\label{uxx+u2(1-u)}
\end{figure}



\begin{figure}[htbp]
    \centering
    \vspace{-0.2cm}
    \includegraphics[width=0.7\textwidth]{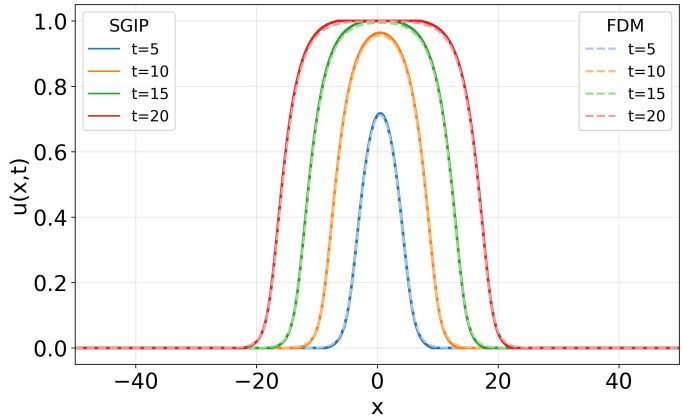}
\caption{Comparison of FDM and SGIP method at $u_t = u_{xx} + e^{-1/2u}(1-u)$}
\label{uxx+exp(1-u)}
\end{figure}


To quantitatively compare the effects of different reaction terms on propagation speed, we plot the evolution of wave front position over time for all three cases in Figure \ref{position_diff_reaction}. These numerical experiments comprehensively demonstrate the effectiveness and accuracy of the SGIP method for solving various types of reaction-diffusion equations.

\begin{figure}[htbp]
    \centering
    \vspace{-0.2cm}
    \begin{subfigure}[t]{0.32\textwidth}
        \centering
        \includegraphics[width=0.95\linewidth]{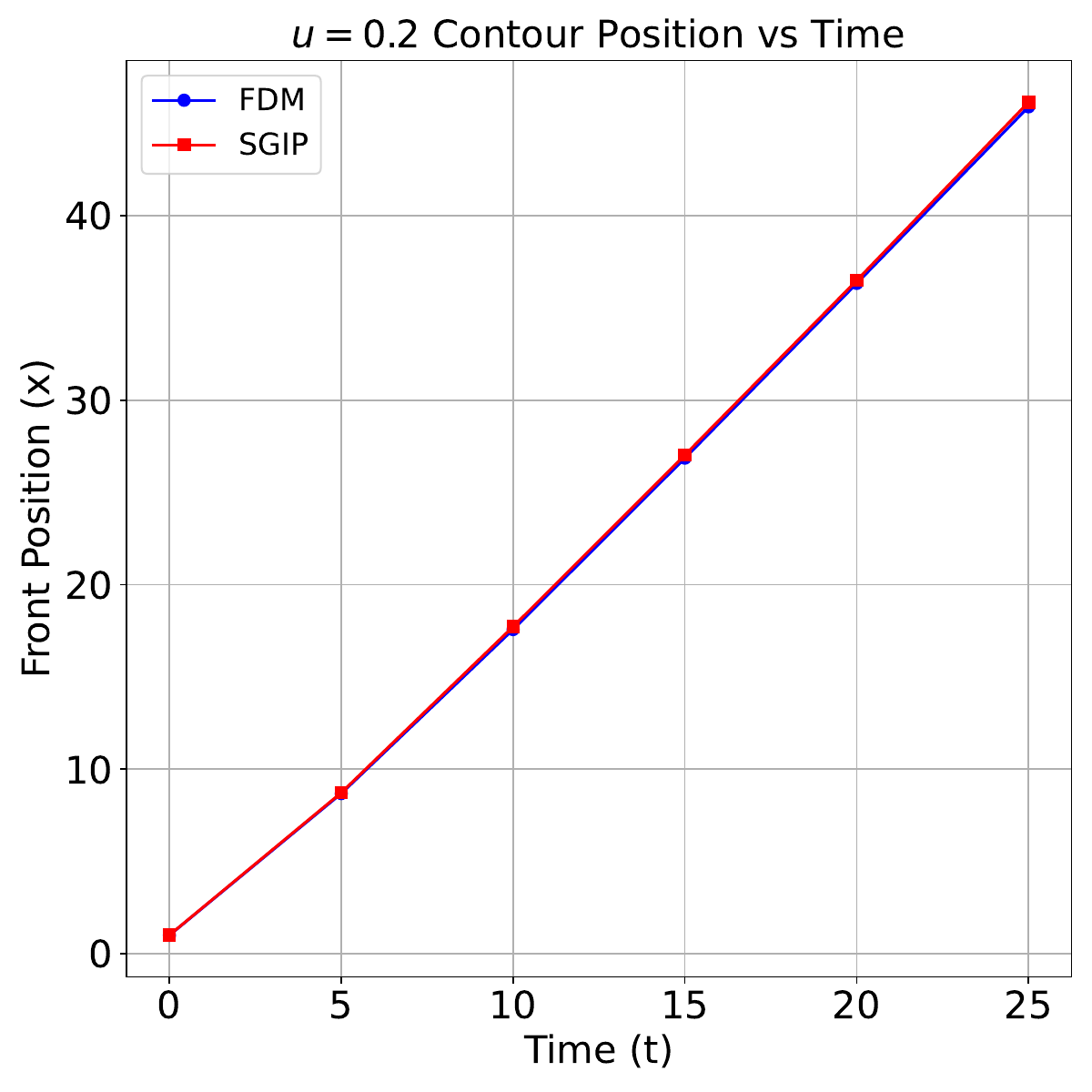}
        \caption{$u_t = u_{xx} + u(1-u)$}
    \end{subfigure}
    \hfill
    \begin{subfigure}[t]{0.32\textwidth}
        \centering
        \includegraphics[width=0.95\linewidth]{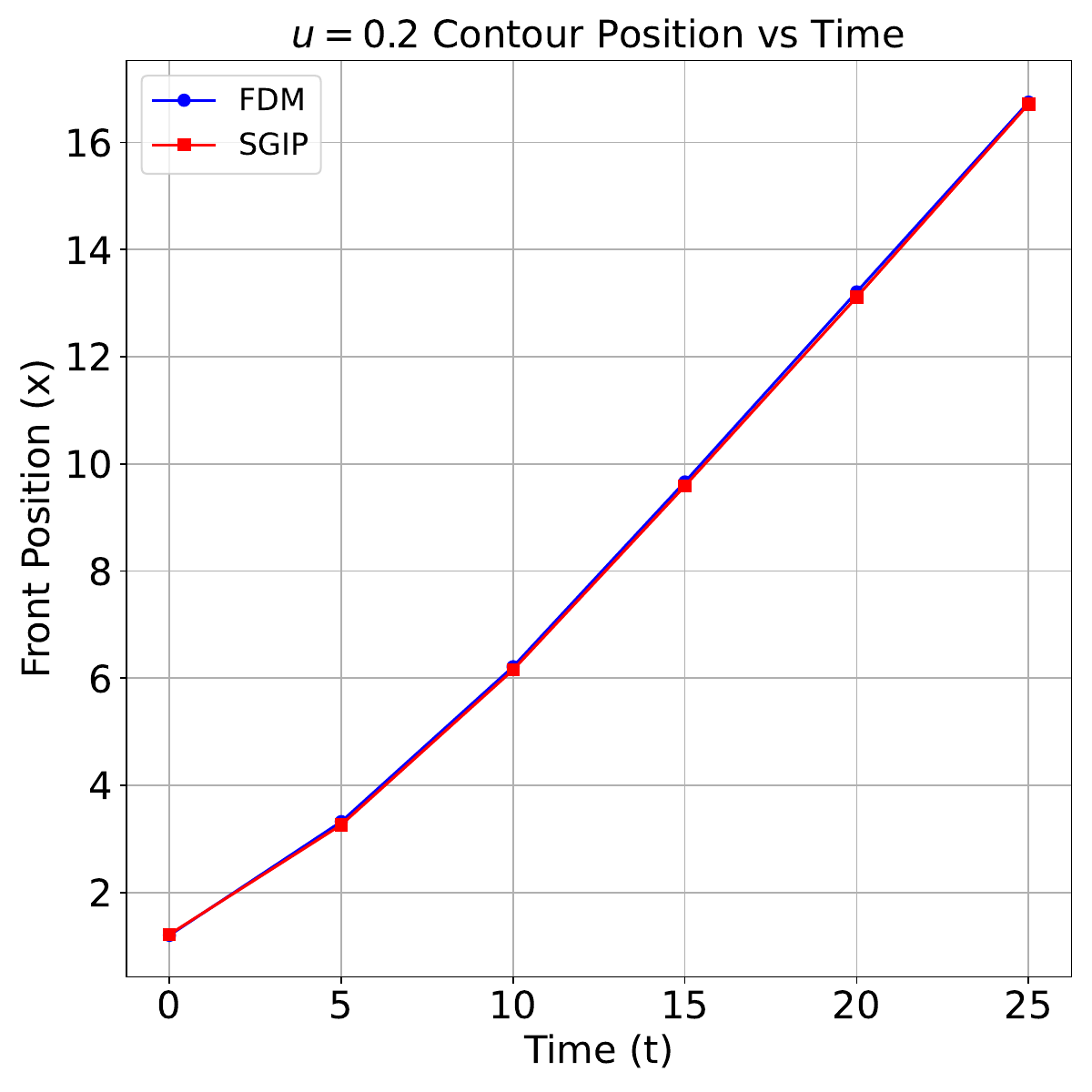}
        \caption{$u_t = u_{xx} + u^2(1-u)$}
    \end{subfigure}
    \hfill
    \begin{subfigure}[t]{0.32\textwidth}
        \centering
        \includegraphics[width=0.95\linewidth]{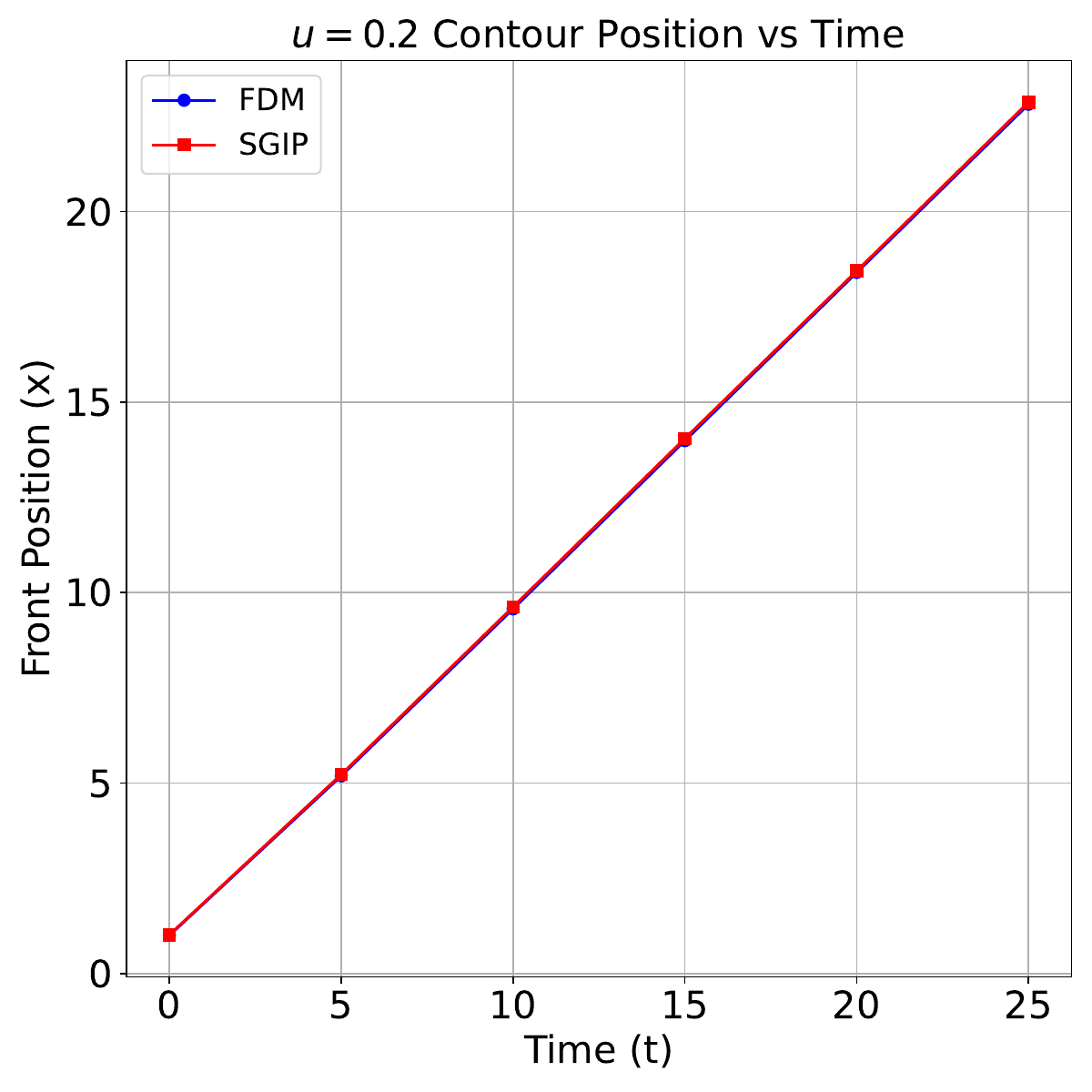}
        \caption{$u_t = u_{xx} + e^{-1/2u}(1-u)$}
    \end{subfigure}
\caption{Propagation speed comparison for particle and FDM}
\label{position_diff_reaction}
\end{figure}

\subsection{2D Numerical Experiments}\label{subsection:2d}
We now extend our investigation to 2D RDA systems to further validate the SGIP method's capability in handling more complex spatial dynamics. The 2D experiments provide insights into pattern formation, wave propagation in multiple dimensions, and the interaction between diffusion and advection processes.




The 2D simulations are conducted on the computational domain $\Omega = [-60, 60]^2$, with the initial condition specified by uniformly sampling particle positions from the unit square: $\mathbf{X}_0^i \sim \text{Uniform}([0,1]^2)$ for $i = 1, \dots, N$, corresponding to an initial density $u_0(x,y) = \chi_{[0,1]^2}(x,y)$. For the reference FDM simulations, we employ a high-resolution uniform grid with spatial discretization $\delta x = \delta y = 10^{-2}$ and temporal step $\delta t = 10^{-3}$ to ensure accurate reference solutions. For the SGIP method, $N = 3 \times 10^6$ particles are used to approximate the density distribution, with the spatial domain discretized into $10^4$ uniform bins for density estimation and a time step of $\Delta t = 0.5$.

\begin{figure}[htbp]
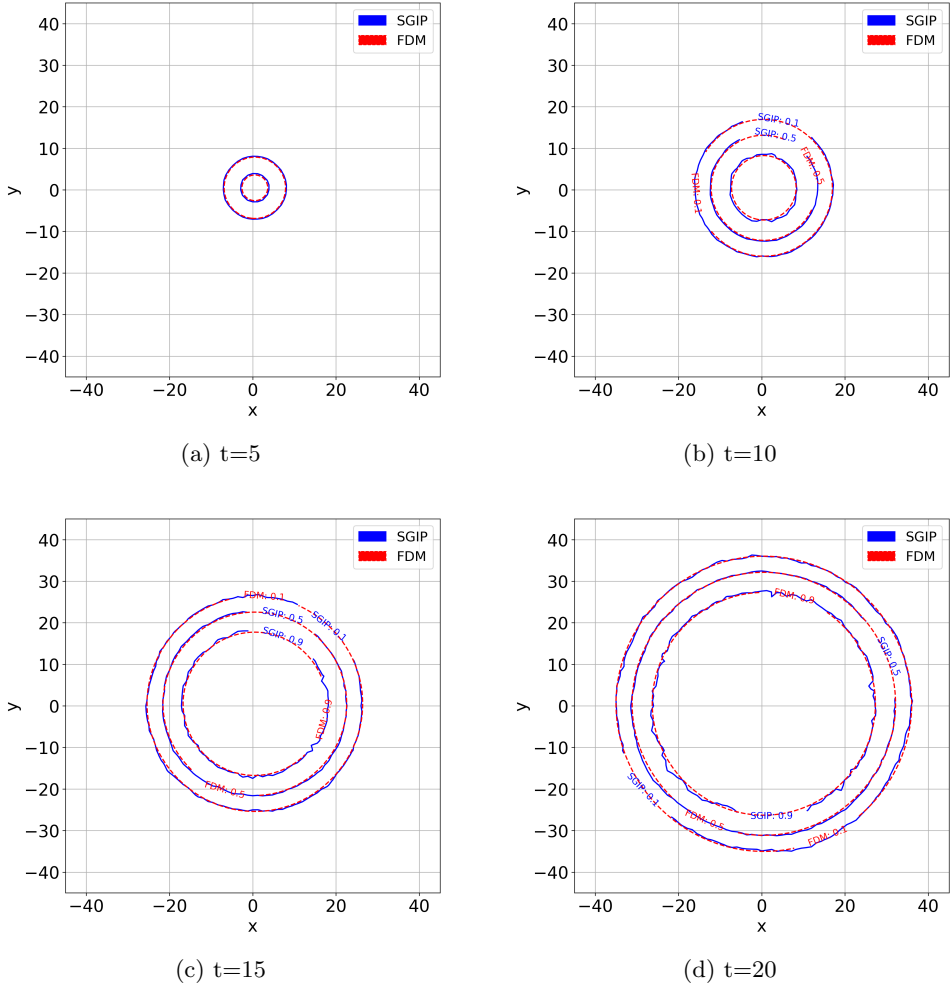

    \centering
    \vspace{-0.2cm}
    \begin{subfigure}[t]{0.48\textwidth}
        \centering
        \includegraphics[width=0.95\linewidth]{picture/KPP_2d_contour_t_5.pdf}
        \caption{t=5}
    \end{subfigure}
    \hfill
    \begin{subfigure}[t]{0.48\textwidth}
        \centering
        \includegraphics[width=0.95\linewidth]{picture/KPP_2d_contour_t_10.pdf}
        \caption{t=10}
    \end{subfigure}
    
    \vspace{0.2cm}
    \begin{subfigure}[t]{0.48\textwidth}
        \centering
        \includegraphics[width=0.95\linewidth]{picture/KPP_2d_contour_t_15.pdf}
        \caption{t=15}
    \end{subfigure}
    \hfill
    \begin{subfigure}[t]{0.48\textwidth}
        \centering
        \includegraphics[width=0.95\linewidth]{picture/KPP_2d_contour_t_20.pdf}
        \caption{t=20}
    \end{subfigure}
   \caption{Comparison of FDM and SGIP method at $u_t = u_{xx} + u_{yy} + u(1-u)$}
   \label{uxx+uyy+u(1-u):symmetric}
\end{figure}


Figure \ref{uxx+uyy+u(1-u):symmetric} demonstrates the evolution of the 2D FKPP equation with uniform initial conditions, showing contour lines at $u = 0.1$, $0.5$, and $0.9$. The SGIP method successfully preserves the circular symmetry of the expanding wave front, maintaining rotational invariance in the contour morphology across all three concentration levels. 

\subsubsection{2D steady flow}

We now investigate the interaction between reaction-diffusion dynamics and advection terms, which is particularly relevant in biological and environmental applications.


We examine the interaction between reaction-diffusion dynamics and three representative steady flows through the model \( u_t = D\Delta u + \mathbf{v} \cdot \nabla u + u(1-u) \). For shear flow \( \mathbf{v} = (\sin y, 0) \), Figure \ref{shear_flow} demonstrates that the SGIP accurately captures the characteristic shear-aligned patterns, showing how the flow stretches and orients the chemical waves. In cellular flow \( \mathbf{v} = (-\sin x\cos y,\cos x\sin y) \), Figure \ref{cellular_flow} reveals how the Lagrangian formulation naturally resolves complex recirculation zones, organizing reaction fronts into circular structures within individual cells, while inducing corrugations along intercellular boundaries due to hyperbolic stagnation points. 

For the cat's eye flow \( \mathbf{v} = (-\sin x\cos y,\cos x\sin y) + 2(\cos x\sin y, -\sin x\cos y) \), Figure \ref{cateye_flow} illustrates the competition between diffusion and advection across two regimes: lower diffusivity (\( D=0.5 \)) produces sharp lobe boundaries formed by alternating elliptic islands and hyperbolic channels, while higher diffusivity (\( D=1 \)) enhances cross-channel mixing and smooths front morphology, demonstrating how flow superposition modulates front propagation through competing recirculation and transport regions.

\begin{figure}[htbp]
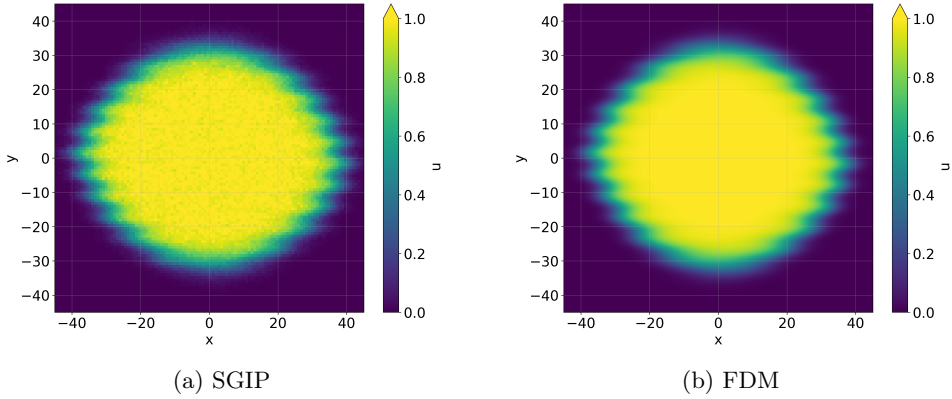

    \centering
    \vspace{-0.2cm}
    \begin{subfigure}[t]{0.48\textwidth}
        \centering
        \includegraphics[width=0.95\linewidth]{picture/SGIP_shear_t20.pdf}
        \caption{SGIP}
    \end{subfigure}
    \hfill
    \begin{subfigure}[t]{0.48\textwidth}
        \centering
        \includegraphics[width=0.95\linewidth]{picture/FDM_shear_t20.pdf}
        \caption{FDM}
    \end{subfigure}
    \caption{SGIP vs. FDM on shear flow at $t=20$}
    \label{shear_flow}
\end{figure}


\begin{figure}[htbp]
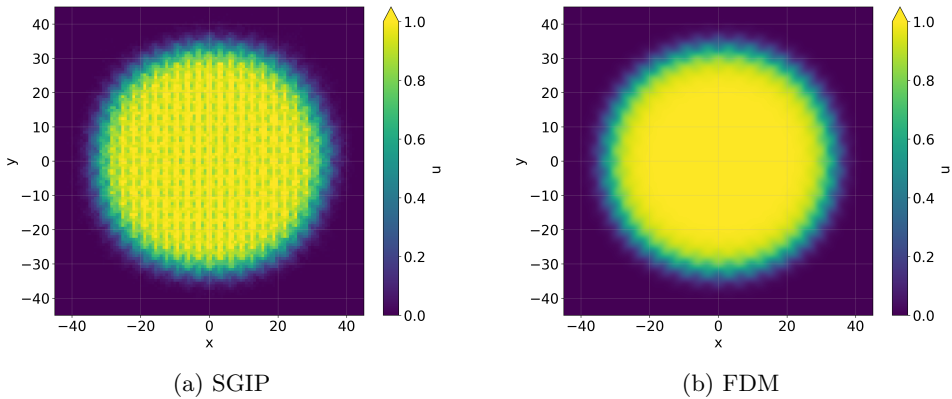

    \centering
    \vspace{-0.2cm}
    \begin{subfigure}[t]{0.48\textwidth}
        \centering
        \includegraphics[width=0.95\linewidth]{picture/SGIP_cellular_t20.pdf}
        \caption{SGIP}
    \end{subfigure}
    \hfill
    \begin{subfigure}[t]{0.48\textwidth}
        \centering
        \includegraphics[width=0.95\linewidth]{picture/FDM_cellular_t20.pdf}
        \caption{FDM}
    \end{subfigure}
    \caption{SGIP vs. FDM on cellular flow at $t=20$}
    \label{cellular_flow}
\end{figure}


\begin{figure}[htbp]
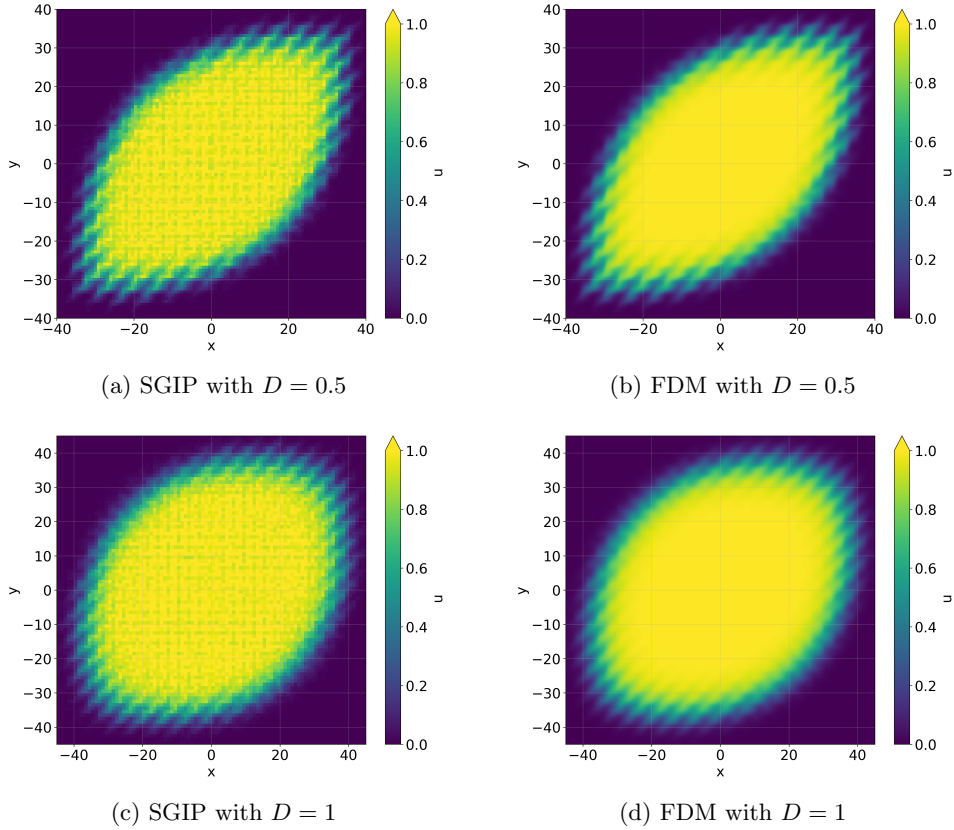

    \centering
    \vspace{-0.2cm}
    \begin{subfigure}[t]{0.48\textwidth}
        \centering
        \includegraphics[width=0.95\linewidth]{picture/SGIP_cateye_t20_D05.pdf}
        \caption{SGIP with $D=0.5$}
    \end{subfigure}
    \hfill
    \begin{subfigure}[t]{0.48\textwidth}
        \centering
        \includegraphics[width=0.95\linewidth]{picture/FDM_cateye_t20_D05.pdf}
        \caption{FDM with $D=0.5$}
    \end{subfigure}
    \vspace{0.2cm}
    \begin{subfigure}[t]{0.48\textwidth}
        \centering
        \includegraphics[width=0.95\linewidth]{picture/SGIP_cateye_t20_D1.pdf}
        \caption{SGIP with $D=1$}
    \end{subfigure}
    \hfill
    \begin{subfigure}[t]{0.48\textwidth}
        \centering
        \includegraphics[width=0.95\linewidth]{picture/FDM_cateye_t20_D1.pdf}
        \caption{FDM with $D=1$}
    \end{subfigure}
    \caption{SGIP vs. FDM on cat's eye flow at $t=20$}
    \label{cateye_flow}
\end{figure}


\subsection{3D Numerical Experiments}\label{subsection:3d}

We now extend our investigation to 3D RDA systems, applying our methods to the ABC flow, which is described by
\begin{equation}
    u_t=D \Delta u+(A\sin z + C\cos y, B\sin x + A\cos z, C\sin y + B\cos x)\cdot\nabla u+u(1-u),
\end{equation}
where $A=1, B=\sqrt{\frac{2}{3}}, C=\sqrt{\frac{1}{3}}$. Simulations are conducted on the domain $[-60,60]^3$ with initial condition $u_0(\mathbf{x}) = \chi_{B(0,1)}(\mathbf{x})$, where $\chi_{B(0,1)}$ denotes the characteristic function of the unit ball centered at the origin.


\begin{figure}[htbp]
    \centering
    \vspace{-0.2cm}
    \begin{subfigure}[t]{0.48\textwidth}
        \centering
        \includegraphics[width=0.95\linewidth]{picture/particle_method_3d_ABC_t20_y0_KPP_D1.pdf}
        \caption{SGIP with projection at $y=0$ plane}
    \end{subfigure}
    \hfill
    \begin{subfigure}[t]{0.48\textwidth}
        \centering
        \includegraphics[width=0.95\linewidth]{picture/fdm_3d_ABC_t20_y0_KPP_D1.pdf}
        \caption{FDM with projection at $y=0$ plane}
    \end{subfigure}
    \caption{SGIP vs. FDM on ABC flow with $D=1$ at $t=20$}
    \label{ABCflow_D=1_u(1-u)}
\end{figure}

\begin{figure}[htbp]
    \centering
    \vspace{-0.2cm}
    \begin{subfigure}[t]{0.48\textwidth}
        \centering
        \includegraphics[width=0.95\linewidth]{picture/particle_method_3d_ABC_t20_y0_KPP_D05.pdf}
        \caption{SGIP with projection at y=0 plane}
    \end{subfigure}
    \hfill
    \begin{subfigure}[t]{0.48\textwidth}
        \centering
        \includegraphics[width=0.95\linewidth]{picture/fdm_3d_ABC_t20_y0_KPP_D05.pdf}
        \caption{FDM with projection at y=0 plane}
    \end{subfigure}
    \caption{SGIP vs. FDM on ABC flow with $D=0.5$ at $t=20$}
    \label{ABCflow_D=0.5_u(1-u)}
\end{figure}

\begin{figure}[htbp]
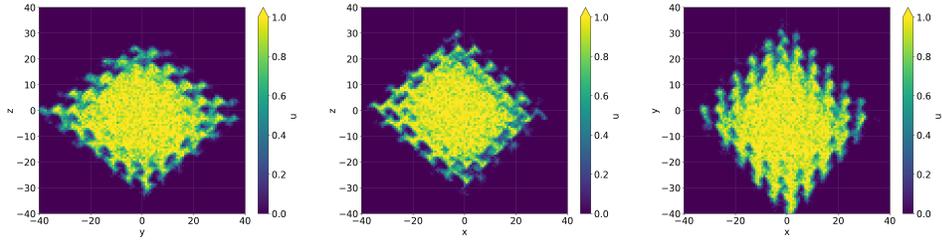

    \centering
    \vspace{-0.2cm}
    \begin{subfigure}[t]{0.32\textwidth}
        \centering
        \includegraphics[width=0.95\linewidth]{picture/particle_method_3d_ABC_t20_x0_KPP_D01.pdf}
        \caption{SGIP with projection at $x=0$ plane}
    \end{subfigure}
    \begin{subfigure}[t]{0.32\textwidth}
        \centering
        \includegraphics[width=0.95\linewidth]{picture/particle_method_3d_ABC_t20_y0_KPP_D01.pdf}
        \caption{SGIP with projection at $y=0$ plane}
    \end{subfigure}
    \begin{subfigure}[t]{0.32\textwidth}
        \centering
        \includegraphics[width=0.95\linewidth]{picture/particle_method_3d_ABC_t20_z0_KPP_D01.pdf}
        \caption{SGIP with projection at $z=0$ plane}
    \end{subfigure}
    \caption{SGIP on ABC flow with $D=0.1$}
    \label{ABCflow_D=0.1_u(1-u)}
\end{figure}

The SGIP employs $5\times 10^6$ particles with $100^3$ spatial bins and a time step $\Delta t = 0.5$. Figures \ref{ABCflow_D=1_u(1-u)} and \ref{ABCflow_D=0.5_u(1-u)} compare both methods on cross-sections through the coordinate planes for diffusion coefficients $D=1$ and $D=0.5$, showing excellent agreement. Since the cross-sectional structures at $x=0$ and $z=0$ are qualitatively similar to those at $y=0$, only the $y=0$ plane is displayed for clarity.

A key advantage of SGIP is its low and stable computational cost. For $D=1$, SGIP requires only 76 s, while the FDM (with $200^3$ grids and $\delta t = 5\times10^{-3}$) takes about 337 s. As $D$ decreases to 0.5 (Figure \ref{ABCflow_D=0.5_u(1-u)}), advection and reaction begin to dominate. The SGIP method, using the same parameters ($5 \times 10^6$ particles, $100^3$ bins, $\Delta t = 0.5$), maintained a similar computation time, demonstrating its robustness without requiring parameter refinement. In contrast, the FDM required a significantly finer discretization ($300^3$ grids, $\delta t = 10^{-4}$) to avoid numerical instability, which increased the runtime to approximately $9\times10^{3}$ s. As $D$ decreases further to 0.1 (Figure \ref{ABCflow_D=0.1_u(1-u)}), the FDM demands an even finer resolution ($400^3$ grid points) that becomes computationally prohibitive due to excessive memory requirements on conventional workstations. The SGIP method, however, remains feasible and accurately captures the complex 3D flow structures across all diffusion regimes.


We further validate the SGIP using more complex nonlinear reaction terms (cubic, Arrhenius), maintaining the same computational parameters. The unit ball initial condition is extended to $u_0(\mathbf{x}) = \chi_{B(0,5)}(\mathbf{x})$ to accommodate the distinct reaction kinetics and ensure clear visualization of the ensuing dynamics. 

\begin{figure}[htbp]
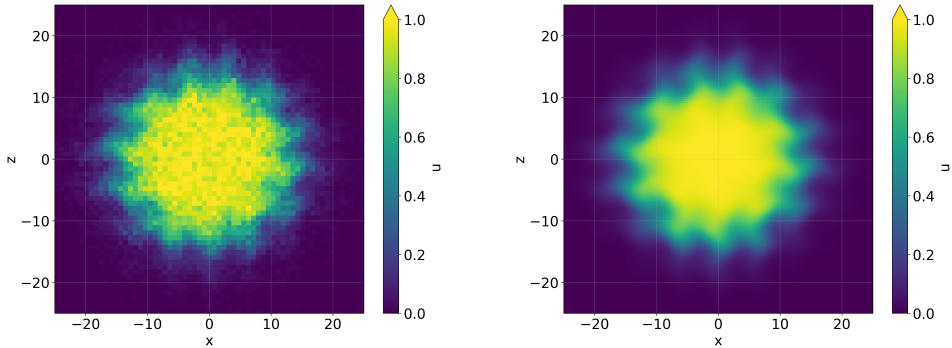

    \centering
    \vspace{-0.2cm}
    \begin{subfigure}[t]{0.48\textwidth}
        \centering
        \includegraphics[width=0.95\linewidth]{picture/particle_method_3d_ABC_t20_y0_nonlinear.pdf}
        \caption{SGIP with projection at $y=0$ plane}
    \end{subfigure}
    \hfill
    \begin{subfigure}[t]{0.48\textwidth}
        \centering
        \includegraphics[width=0.95\linewidth]{picture/fdm_3d_ABC_t20_y0_nonlinear.pdf}
        \caption{FDM with projection at $y=0$ plane}
    \end{subfigure}
    \caption{SGIP vs. FDM on ABC Flow with Reaction Term: $u^2(1-u)$ at $t=20$}
    \label{ABCflow_u^2(1-u)}
\end{figure}

\begin{figure}[htbp]
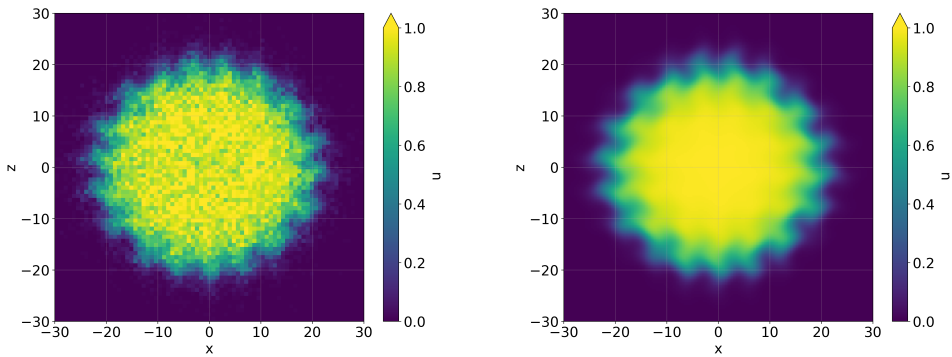

    \centering
    \vspace{-0.2cm}
    \begin{subfigure}[t]{0.48\textwidth}
        \centering
        \includegraphics[width=0.95\linewidth]{picture/particle_method_3d_ABC_t20_y0_exp.pdf}
        \caption{SGIP with projection at $y=0$ plane}
    \end{subfigure}
    \hfill
    \begin{subfigure}[t]{0.48\textwidth}
        \centering
        \includegraphics[width=0.95\linewidth]{picture/fdm_3d_ABC_t20_y0_exp.pdf}
        \caption{FDM with projection at $y=0$ plane}
    \end{subfigure}
    \caption{SGIP vs. FDM on ABC Flow with Reaction Term: $e^{-\frac{1}{2u}}(1-u)$ at $t=20$}
    \label{ABCflow_exp(1-u)}
\end{figure}

Figure \ref{ABCflow_u^2(1-u)} demonstrates the method's performance with cubic reaction term $u^2(1-u)$, while Figure \ref{ABCflow_exp(1-u)} shows results for the Arrhenius-type reaction $e^{-1/(2u)}(1-u)$. In both cases, the SGIP accurately captures the complex interplay between advection, diffusion, and nonlinear reaction dynamics, showing excellent agreement with FDM reference solutions.

The consistent performance across different reaction types and initial conditions demonstrates the robustness and versatility of the SGIP. Its Lagrangian framework naturally handles complex flow geometries and nonlinear reactions without requiring specialized discretization techniques. The method's ability to maintain accuracy with relatively large time steps ($\Delta t = 0.5$ versus FDM's $\delta t = 10^{-3}$) while avoiding the curse of dimensionality in 3D problems represents a significant computational advantage for complex ARD systems.

\section{Conclusion}\label{section_con}

We have presented the SGIP method, a novel stochastic genetic interacting particle algorithm for solving reaction-diffusion-advection equations. The method's effectiveness stems from an integrated design that combines operator splitting for process decoupling, stochastic particles for transport, and adaptive resampling for long-term stability.

The main theoretical contribution is a complete convergence analysis, proving consistent accuracy of the method across various dimensionalities. Numerical experiments demonstrate SGIP's capability in handling diverse challenges, including different reaction types (FKPP, cubic, Arrhenius) and complex flows (shear, cellular, ABC) in 1D to 3D domains. The Lagrangian formulation naturally resolves sharp gradients and complex flow geometries without resorting to specialized discretizations. Notably, in higher dimensions, the method offers computational advantages by mitigating the curse of dimensionality and maintaining accuracy with relatively large time steps.

The adaptive resampling strategy is crucial for sustained stability and efficiency. By intelligently redistributing particles based on Eulerian density reconstruction, it balances statistical accuracy with computational cost. Future work will extend SGIP to multi-species systems and implement the fast adaptive algorithm discussed in Remark~\ref{remark_alg} to concentrate computational resources in active transition regions, thereby enhancing efficiency for higher-dimensional problems.

\section*{Acknowledgements}
ZW was partly supported by NTU SUG-023162-00001, MOE AcRF Tier 1 Grant RG17/24. JX was partly supported by NSF grants DMS-2219904 and DMS-2309520, a Qualcomm Gift Award, and the Swedish Research Council grant no. 2021-06594 during his residence at the Institut Mittag-Leffler in Djursholm, Sweden, and the support during his stay at the E. Schr\"odinger Institute, Vienna, Austria, in the Fall of 2025. ZZ was partly supported by the National Natural Science Foundation of China (Projects 12171406 and 92470103), the Hong Kong RGC grant (Projects 17304324 and 17300325), the Seed Funding Programme for Basic Research (HKU), and the Hong Kong RGC Research Fellow Scheme 2025. The computations were performed using the HKU ITS research computing facilities.

\bibliographystyle{siamplain}
\bibliography{reference}

@inproceedings{crank1947practical,
  title={A practical method for numerical evaluation of solutions of partial differential equations of the heat-conduction type},
  author={Crank, John and Nicolson, Phyllis},
  booktitle={Mathematical Proceedings of the Cambridge Philosophical Society},
  volume={43},
  number={1},
  pages={50--67},
  year={1947},
  organization={Cambridge University Press}
}

@book{gear1971numerical,
  title={Numerical Initial Value Problems in Ordinary Differential Equations},
  author={Gear, C William},
  year={1971},
  publisher={Prentice Hall PTR}
}

@book{williams2018combustion,
  title={Combustion Theory},
  author={Williams, Forman A},
  year={2018},
  publisher={CRC Press}
}

@article{peskin1977numerical,
  title={Numerical analysis of blood flow in the heart},
  author={Peskin, Charles S},
  journal={Journal of Computational Physics},
  volume={25},
  number={3},
  pages={220--252},
  year={1977},
  publisher={Elsevier}
}

@article{chorin1973numerical,
  title={Numerical study of slightly viscous flow},
  author={Chorin, Alexandre Joel},
  journal={Journal of Fluid Mechanics},
  volume={57},
  number={4},
  pages={785--796},
  year={1973},
  publisher={Cambridge University Press}
}

@article{shen2013finite,
  title={Finite element computation of {KPP} front speeds in cellular and cat’s eye flows},
  author={Shen, Lihua and Xin, Jack and Zhou, Aihui},
  journal={Journal of Scientific Computing},
  volume={55},
  number={2},
  pages={455--470},
  year={2013},
  publisher={Springer}
}

@article{ParWasser_2025,
  author = {Y. Jin and S. Liu and H. Wu and X. Ye and H. Zhou},
  title = {Parameterized {W}asserstein Gradient Flow},
  journal = {Journal of Computational Physics},
  volume = {524},
  pages = {113660},
  year = {2025}
}

@article{hu2024stochastic,
  title={A Stochastic Interacting Particle-Field Algorithm for a Haptotaxis Advection-Diffusion System Modeling Cancer Cell Invasion},
  author={Hu, Boyi and Wang, Zhongjian and Xin, Jack and Zhang, Zhiwen},
  journal={arXiv preprint arXiv:2407.05626},
  year={2024}
}

@article{hu2025convergence,
  title={Convergence Analysis of a Stochastic Interacting Particle-Field Algorithm for {3D} Parabolic-Parabolic {K}eller-{S}egel Systems},
  author={Hu, Boyi and Wang, Zhongjian and Xin, Jack and Zhang, Zhiwen},
  journal={arXiv preprint arXiv:2504.10089},
  year={2025}
}

@article{wang2025novel,
  title={A Novel Stochastic Interacting Particle-Field Algorithm for {3D} Parabolic-Parabolic {Keller-Segel} Chemotaxis System},
  author={Wang, Zhongjian and Xin, Jack and Zhang, Zhiwen},
  journal={Journal of Scientific Computing},
  volume={102},
  number={3},
  pages={1--23},
  year={2025},
  publisher={Springer}
}

@book{hundsdorfer2013numerical,
  title={Numerical Solution of Time-Dependent Advection-Diffusion-Reaction Equations},
  author={Hundsdorfer, Willem and Verwer, Jan G},
  volume={33},
  year={2013},
  publisher={Springer Science \& Business Media}
}

@article{lyu2025stochastic,
  title={A stochastic branching particle method for solving non-conservative reaction-diffusion equations},
  author={Lyu, Liyao and Lei, Huan},
  journal={arXiv preprint arXiv:2510.27615},
  year={2025}
}

@article{zhang2025convergent,
  title={A convergent interacting particle method for computing {KPP} front speeds in random flows},
  author={Zhang, Tan and Wang, Zhongjian and Xin, Jack and Zhang, Zhiwen},
  journal={SIAM/ASA Journal on Uncertainty Quantification},
  volume={13},
  number={2},
  pages={639--678},
  year={2025},
  publisher={SIAM}
}

@article{puckett1989convergence,
  title={Convergence of a random particle method to solutions of the {K}olmogorov equation $u_t = \nu u_{xx} + u(1-u)$},
  author={Puckett, Elbridge Gerry},
  journal={Mathematics of Computation},
  volume={52},
  number={186},
  pages={615--645},
  year={1989}
}

@article{lyu2022convergent,
  title={A convergent interacting particle method and computation of {KPP} front speeds in chaotic flows},
  author={Lyu, Junlong and Wang, Zhongjian and Xin, Jack and Zhang, Zhiwen},
  journal={SIAM Journal on Numerical Analysis},
  volume={60},
  number={3},
  pages={1136--1167},
  year={2022},
  publisher={SIAM}
}

@article{ranz1979applications,
  title={Applications of a stretch model to mixing, diffusion, and reaction in laminar and turbulent flows},
  author={Ranz, William E},
  journal={AIChE Journal},
  volume={25},
  number={1},
  pages={41--47},
  year={1979},
  publisher={Wiley Online Library}
}

@article{constantin2000bulk,
  title = {Bulk Burning Rate in Passive-Reactive Diffusion},
  author = {Constantin, Peter and Kiselev, Alexander and Oberman, Adam and Ryzhik, Leonid},
  journal = {Archive for Rational Mechanics and Analysis},
  volume = {154},
  number = {1},
  pages = {53--91},
  year = {2000}
}

@article{pierrehumbert1991large,
  title={Large-scale horizontal mixing in planetary atmospheres},
  author={Pierrehumbert, RT},
  journal={Physics of Fluids A: Fluid Dynamics},
  volume={3},
  number={5},
  pages={1250--1260},
  year={1991},
  publisher={American Institute of Physics}
}

@article{dombre1986chaotic,
  title={Chaotic streamlines in the {ABC} flows},
  author={Dombre, Thierry and Frisch, Uriel and Greene, John M and H{\'e}non, Michel and Mehr, A and Soward, Andrew M},
  journal={Journal of Fluid Mechanics},
  volume={167},
  pages={353--391},
  year={1986},
  publisher={Cambridge University Press}
}

@article{xin2000front,
  title={Front propagation in heterogeneous media},
  author={Xin, Jack},
  journal={SIAM Review},
  volume={42},
  number={2},
  pages={161--230},
  year={2000},
  publisher={SIAM}
}

@book{fife2013mathematical,
  title={Mathematical aspects of reacting and diffusing systems},
  author={Fife, Paul C},
  volume={28},
  year={2013},
  publisher={Springer Science \& Business Media}
}

@article{arrhenius1889reaktionsgeschwindigkeit,
  title = {{\"U}ber die {R}eaktionsgeschwindigkeit bei der {I}nversion von {R}ohrzucker durch {S}{\"a}uren},
  author = {Arrhenius, Svante},
  journal = {Zeitschrift f{\"u}r physikalische Chemie},
  volume = {4},
  number = {1},
  pages = {226--248},
  year = {1889},
  publisher = {De Gruyter Oldenbourg}
}

@article{sherman1986monte,
  title={A {M}onte {C}arlo method for scalar reaction diffusion equations},
  author={Sherman, Arthur S and Peskin, Charles S},
  journal={SIAM journal on scientific and statistical computing},
  volume={7},
  number={4},
  pages={1360--1372},
  year={1986},
  publisher={SIAM}
}

@article{sherman1988solving,
  title={Solving the {H}odgkin--{H}uxley equations by a random walk method},
  author={Sherman, Arthur S and Peskin, Charles S},
  journal={SIAM journal on scientific and statistical computing},
  volume={9},
  number={1},
  pages={170--190},
  year={1988},
  publisher={SIAM}
}

@article{fisher1937wave,
  title={The wave of advance of advantageous genes},
  author={Fisher, Ronald Aylmer},
  journal={Annals of Eugenics},
  volume={7},
  number={4},
  pages={355--369},
  year={1937},
  publisher={Wiley Online Library}
}

@article{black1973pricing,
  title={The pricing of options and corporate liabilities},
  author={Black, Fischer and Scholes, Myron},
  journal={Journal of Political Economy},
  number={3},
  pages={637--654},
  year={1973},
  publisher={The University of Chicago Press}
}

@article{turing1952chemical,
  title={The chemical basis of morphogenesis},
  author={Turing, Alan M.},
  journal={Philosophical Transactions of the Royal Society of London. Series B, Biological Sciences},
  volume={237},
  number={641},
  pages={37--72},
  year={1952}
}

@article{cahn1958free,
  title={Free energy of a nonuniform system. {I}. {I}nterfacial free energy},
  author={Cahn, John W and Hilliard, John E},
  journal={The Journal of Chemical Physics},
  volume={28},
  number={2},
  pages={258--267},
  year={1958}
}

@book{leveque2007finite,
  title={Finite difference methods for ordinary and partial differential equations: steady-state and time-dependent problems},
  author={LeVeque, Randall J},
  year={2007},
  publisher={SIAM}
}

@article{hoff1978stability,
  title={Stability and convergence of finite difference methods for systems of nonlinear reaction-diffusion equations},
  author={Hoff, David},
  journal={SIAM Journal on Numerical Analysis},
  volume={15},
  number={6},
  pages={1161--1177},
  year={1978},
  publisher={SIAM}
}

@article{komala2018finite,
  title={Finite element modeling of nonlinear reaction--diffusion--advection systems of equations},
  author={Komala Sheshachala, Sanjay and Codina, Ramon},
  journal={International Journal of Numerical Methods for Heat \& Fluid Flow},
  volume={28},
  number={11},
  pages={2688--2715},
  year={2018},
  publisher={Emerald Publishing Limited}
}

@book{hughes2012finite,
  title={The finite element method: linear static and dynamic finite element analysis},
  author={Hughes, Thomas J.R.},
  year={2012},
  publisher={Courier Corporation}
}

@article{strang1968construction,
  title={On the construction and comparison of difference schemes},
  author={Strang, Gilbert},
  journal={SIAM Journal on Numerical Analysis},
  volume={5},
  number={3},
  pages={506--517},
  year={1968},
  publisher={SIAM}
}

@article{trotter1959product,
  title={On the product of semi-groups of operators},
  author={Trotter, Hale F},
  journal={Proceedings of the American Mathematical Society},
  volume={10},
  number={4},
  pages={545--551},
  year={1959},
  publisher={JSTOR}
}
\end{document}